\newtheorem{thm}{Theorem}[section]
\newtheorem{prop}[thm]{Proposition}
\newtheorem{lem}[thm]{Lemma}
\numberwithin{equation}{section}
\def\Z{{\Bbb Z}}
\def\Q{{\Bbb Q}}
\def\R{{\Bbb R}}
\def\C{{\Bbb C}}
\def\emp{\varnothing}
\def\fA{{\frak A}}
\def\cB{{\mathscr B}}
\def\cU{{\mathscr U}}
\def\GL{{\operatorname {GL}}}
\def\SL{{\operatorname{SL}}}
\def\PGL{{\operatorname{{\rm{PGL}}}}}
\def\Re{{\operatorname {Re}}}
\def\tr{{\operatorname{tr}}}
\def\nr{{\operatorname{N}}}
\def\Hom{{\rm{Hom}}}
\def\vol{{\operatorname{vol}}}
\def\leq{\leqslant}
\def\geq{\geqslant}
\def\bsl{\backslash}
\def\d {{{\rm d}}}
\def\1{{\bold 1}}
\newcommand{\e}{\epsilon}
\renewcommand{\t}{\tau}
\newcommand{\Scal}{{\mathscr S}}
\def\GL{{\rm {GL}}}
\renewcommand{\Re}{\operatorname{Re}}
\def\GL{\mathrm{GL}}
\def\PGL{\mathrm{PGL}}
\def\SL{\mathrm{SL}}
\def\dim{\mathop{\mathrm{dim}}}
\def\mod{\mathop{\mathrm{mod}}}
\def\lim{\mathop{\mathrm{lim}}}
\def\cInd{\mathrm{cInd}}
\def\exp{\mathop{\mathrm{exp}}}
\def\cB{\mathcal{B}}
\def\cb{\mathbf{b}}
\def\t{^\times}
\def\be{\begin{enumerate}}
\def\ee{\end{enumerate}}
\def\ha{\frac{1}{2}}
\def\iv{^{-1}}
\def\dw{$ $\\$\sp\sp\sp$}
\def\ds{\\$\sp\sp\sp$}
\def\lr{\Longleftrightarrow}
\def\sp{\text{ }}
\def\dm{\ul}
\def\al{\alpha}% alpha
\def\ep{\epsilon}% epsilon
\def\vpi{\varpi}
\def\Ga{\Gamma}
\def\si{\sigma}
\def\1{\mathbf{1}}% unit matrix
\def\0{\mathbf{0}}% zero matrix
\def\ul{\underline}
\def\wh{\widehat}% hat
\def\ol{\overline}% overline
\def\bsl{\backslash}
\def\inf{\infty}
\begin{document}
\pagestyle{plain}
\title{Functional equations and gamma factors of local zeta functions for the metaplectic cover of ${\rm SL}_2$}
\author{Kazuki Oshita}
\address{Faculty of Mathematics and Physics, Institute of Science and Engineering, Kanazawa University, Kakumamachi, Kanazawa, Ishikawa, 920-1192, Japan }
\email{daxiayiqi@gmail.com}

\author{Masao Tsuzuki}
\address{Faculty of Science and Technology, Sophia University, Kioi-cho 7-1 Chiyoda-ku Tokyo, 102-8554, Japan}
\email{m-tsuduk@sophia.ac.jp}

\maketitle

\begin{abstract}
We introduce a local zeta-function for an irreducible admissible supercuspidal representation $\pi$ of the metaplectic double cover of $\SL_2$ over a non-archimedean local field of characteristic zero. We prove a functional equation of the local zeta-functions showing that the gamma factor is given by a Mellin type transform of the Bessel function of $\pi$. We obtain an expression of the gamma factor, which shows its entireness on $\C$. Moreover, we show that, through the local theta-correspondence, the local zeta-function on the covering group is essentially identified with the local zeta-integral for spherical functions on ${\rm PGL}_2\cong {\rm SO}_3$ associated with the prehomogenous vector space of binary symmetric matrices. 
\end{abstract}

\section{Introduction}
The automorphic $L$-functions and the zeta-funtions of prehomogenous vector spaces have their origin in the famous Tate's thesis \cite{Tate}, whose influence on the later development of the thery has been decisive. All the basic features in the modern theroy of $L$-functions already been observed in \cite{Tate}; one of such is the local zeta-integral, which should be introduced in a different way in a different context but enjoys a common property that it should be linked to its Fourier dual by a ``gamma factor" in a functional equation. 
The local zeta-integral for prehomogenous vector spaces with spherical functions are originally introduced by Fumihiro Sato (\cite{Sa94}, \cite{Sa96}), who establised their functional equations over the real number field $\R$ assuming certain nice properties of the space of the shperical functions. More recently, Wen-Wei Li conducted series of studies in a broder setting and framework, including the case when the spherical function is not necessariry unramified and the base field is non-archimedean (\cite{Li}, \cite{Li2020}). 

In this paper, we consider the local zeta-integral arising from the zeta-functions of the prehomogenous vector space $(PD^\times \times{\rm GL}_1, V)$ with coefficients related to the toric periods of automorphic forms (\cite{Sa94}), where $D$ is a quaternion algebra over an algebraic number field $F$ and $V$ is the space of elements of $D$ with reduced trace $0$. In this setting, the local zeta-integral in question is defined as a distribution on a space of spherical functions of an irreducible admissible representation $\pi$ of $k^\times\bsl (D\otimes_{F} k)^\times$ over $k$, a completion of $F$ at a place. When $k$ is non-archimedean local field, we can clarify the properties of the gamma factors of the local zeta-integral by relating them to the Mellin transformation of the Whittaker functions belonging to $\tilde \pi$, the theta-lift of $\pi$ to the double cover of $\SL_2(k)$; this is our key observation. In a context of global zeta-functions, similar point of view has been noticed by F. Sato in a broder perspective for some time. Indeed, he made a remark (\cite[p,134, Remark (2)]{Sa94}) that the global zeta-function of prehomogeneous vector spaces should be related to a Dirichlet series obtained from the Fourier coefficients of a modular form of half integral weight (cf.\cite{Sh}). We confirm that this relationship might also be true in the local context through a case study. The gamma factor of our zeta-function is defined as an improper Mellin transform of the Bessel function of $\tilde \pi$. The Bessel functions of latter object have been studied by \cite{Soudry}, \cite{Baruch}, \cite{BM} in a slghtly different but related context. In this article, to explain the main idea clearly, avoiding a subtle isse of convergence, we focus on the case when $\tilde \pi$ is supercuspidal, wheres the theory can be developed without the supercuspidality assumption on $\tilde \pi$. As a potential application, our main results can be used to obtain the convexity bound or a subconvexity boud of the global zeta functions attached to modular forms of half-integral weight. We leave the general case and the application to a futuer work.

\subsection{Description of main result}
Let us explain the main results of this paper introducing basic notation on the way. Let $k$ be a local non-Archimedean field of characteristic $0$, $O$ the ring of integers in $k$, and $P$ the maximal ideal of $O$. Throughout this article, we suppose 
\begin{align}
\text{$q:=\#(O/P)$ is odd.} 
\label{Odd}
\end{align}We fix a uniformizer $\vpi\in P$. Let $|\cdot|$ denote the normalized absolute value of $k$, i.e., $|\vpi|=q\iv$. Fix a non-trivial additive character $\psi$ of $k$ that is trivial on $O$ and non-trivial on $P^{-1}$. Let $\d t$ be the self-dual Haar measure on $k$ with respect to the self-duality defined by $\psi$; then $\vol(O)=1$. A Haar measure on the multiplicative group $k\t$ is fixed to be $d\t x:=dx/|x|$. Let $\ol{S}=\{[g,\e]\mid g\in S,\,\e\in \{\pm 1\}\}$ be the non-trivial topological double covering group of $S:=\SL_2(k)$ (see \S\ref{sec:Cover}). Let $(\pi,V)$ be an irreducible admissible representation of $\ol{S}$. For each element $\xi\in k\t$ a linear map $l^\xi:V\to\C$ is called a $\psi^\xi$-Whittaker functional of $\pi$ if it satisfies 
$$l^\xi(\pi(n(a))v)=\psi(\xi a)l^\xi(v)\quad\left(v\in V,\sp a\in k\right).$$
It is a basic theorem that the $\psi^{\xi}$-Whittaker functionals of $\pi$ are unique up to scalar multiples (\cite{GelbartHowPS1979}; see also \cite[Lemma 2]{Waldspurger1991}). Set
\begin{align}
k(\pi):=\{\xi\in k\t\mid\text{there exists a nontrivial }\psi^\xi\text{-Whittaker functional } l^\xi\text{ of }\pi.\}.
\label{kPi}
\end{align}
It is easy to see that $k(\pi)$ is a union of square classes $\xi(k^\times)^2$ so that $\#(k(\pi)/(k^\times)^2)\leq \#(k^\times/(k^\times)^2)=4$. 

In the rest of the article, we suppose that
\begin{itemize}
\item[(i)] $\pi$ is genuine, i.e., the homomorphism $\pi:\ol{S}\rightarrow {\rm GL}(V)$ does not factor through the convering morphism $\ol{S}\rightarrow S$, 
\item[(ii)] $\pi$ is supercuspidal, i.e., it satisfies the following condition:
$$\text{For all $v\in V$ there exists $n\in \Z$ such that $\int_{P^n}\pi(n(x))vdx=0$},  
$$
where we use the notation $n(x):=[\left(\begin{smallmatrix}1&x\\ 0&1\\ \end{smallmatrix}\right),1]\in \ol{S}$ for $x\in k$. 
\end{itemize} For each $v\in V$, $\xi\in k(\pi)$ and $\mu$ is a character of $k\t$ we define a local zeta-function as
$$Z(s,\mu, l^\xi,v):=2\int_{k\t}l^\xi(\pi(\dm{x})v)\chi_\psi(x)\mu(x)|x|^{s-\ha}d\t x, \quad s\in \C, $$
where $\dm{x}:=[\left(\begin{smallmatrix}x&0\\0&x\iv\end{smallmatrix}\right),1]$ and $\chi_\psi: k\t\rightarrow \C^\times$ is a function on $k^\times$ defined by the formula \eqref{Def-GenChar}. We have that the local zeta-function $Z(s,\mu,l^\xi,v)$ converges absolutely for all $s\in\C$, and $Z(s,\mu,l^\xi,v)$ is expressed as a polynomial in $q^s, q^{-s}$ (see Theorem \ref{thm-ZetaInt}). 

Let $J_{\pi}^{\xi,\eta}(g)$ be the Bessel function of $\pi$ introduced in \cite{BM}, whose defintion is recalled in \S\ref{sec:Bessel}. For any character $\mu$ of $k^\times$, we define a gamma factor $\Ga^{\xi,\eta}_{\pi,\mu}(s)$ to be the limit (see \S\ref{sec:Bessel}): 
\begin{align}
\Ga^{\xi,\eta}_{\pi,\mu}(s):=\lim_{\substack{n\rightarrow \infty \\ m\rightarrow \infty}} 2\int_{P^{-n}-P^{m}}J^{\xi,\eta}_\pi(\dm{x}w)\chi_\psi(x)\mu(x)|x|^{s-\ha}d\t x\quad\left(s\in \C\right),
 \label{Intro-f1}
\end{align}
where $w:=[\left(\begin{smallmatrix}0&-1\\1&0\\\end{smallmatrix}\right),1]$. By using a concrete realization of irreducible supercuspidal representations of $\ol{S}$ due to \cite{Man84-I}, we explicitly construct Whittaker functionals $\{l^\xi\}_{\xi\in k(\pi)}$ together with a description of a set $X(\pi)$ of representatives for $k(\pi)/(k^\times)^2$, and calculated the gamma factors. As a consequence, we obtain the following theorem, which is our first main theorem; for more precise statement, we refer to Theorem \ref{thm-gamma}, and for the proof, to \S\ref{sec:Pf-th-gamma}. 
\begin{thm}
For any $\xi,\eta\in k(\pi)$, the limit in \eqref{Intro-f1} exists and $\Ga^{\xi,\eta}_{\pi,\mu}(s)$ is expressed as a polynomial in $q^{s}$. In particular, it is entire on $\C$.
\end{thm}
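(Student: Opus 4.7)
The plan is to reduce the improper integral over the annulus $P^{-n}\setminus P^{m}$ to a \emph{finite} sum of shell integrals in $|x|$, exploiting that, for $\pi$ supercuspidal, the function $x\mapsto J^{\xi,\eta}_\pi(\dm{x}w)$ on $k\t$ has bounded support. Since $J^{\xi,\eta}_\pi$ is locally constant on $\ol{S}$ and $\chi_\psi,\mu$ are continuous characters on $k\t$, the function $F(x):=J^{\xi,\eta}_\pi(\dm{x}w)\chi_\psi(x)\mu(x)$ is locally constant on $k\t$, and
$$2\int_{P^{-n}\setminus P^{m}}F(x)|x|^{s-\ha}\,d\t x=\sum_{k=-m+1}^{n}q^{k(s-\ha)}A_{k},\qquad A_{k}:=2\int_{|x|=q^{k}}F(x)\,d\t x,$$
with each $A_{k}\in\C$ well-defined and independent of $n,m$. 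The theorem thus reduces to showing that $A_{k}=0$ outside a finite window in $k$; granted this, the double limit trivially exists, equals the finite sum $\sum_{k}q^{k(s-\ha)}A_{k}$, and is a (Laurent) polynomial in $q^{s}$, hence entire on $\C$.

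For the vanishing at large $|x|$ (i.e.\ $k\gg 0$), the first step is to combine the Bruhat identity $\dm{x}wn^{-}(c)=n(-cx^{2})\dm{x}w$ (a direct $2\times 2$ matrix check using $wn^{-}(c)=n(-c)w$ and $\dm{x}n(a)\dm{x}\iv=n(ax^{2})$) with the left $\psi^\xi$-equivariance of $J^{\xi,\eta}_\pi$ under $N$, to obtain
$$J^{\xi,\eta}_\pi(\dm{x}wn^{-}(c))=\psi(-\xi cx^{2})J^{\xi,\eta}_\pi(\dm{x}w),\qquad c\in k.$$
The right invariance of $J^{\xi,\eta}_\pi$ under a compact-open subgroup containing $n^{-}(P^{N})$ (for some $N$ depending on $\pi$) then forces $\psi(-\xi cx^{2})=1$ for every $c\in P^{N}$, whenever $J^{\xi,\eta}_\pi(\dm{x}w)\ne 0$. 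This fails as soon as $|x|$ is large enough: one can pick $c\in P^{N}$ so that $v(\xi cx^{2})<0$, which makes $\psi(-\xi cx^{2})\ne 1$. Hence $J^{\xi,\eta}_\pi(\dm{x}w)=0$, and thus $A_{k}=0$, for all $k>k_{+}$ for some explicit $k_{+}$.

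The vanishing at small $|x|$ cannot be extracted from abstract equivariance alone: the analogous manipulation using right $N$-invariance only gives a conductor bound on $\eta$, not a support bound on $x$. Here one must invoke the explicit realization of irreducible supercuspidal representations of $\ol{S}$ from \cite{Man84-I}, in terms of which $J^{\xi,\eta}_\pi(\dm{x}w)$ admits a closed-form expression involving Gauss-sum-type quantities attached to the characters parametrizing $\pi$; from that formula one reads off the vanishing for $|x|$ below a threshold $q^{k_{-}}$. Combining the two vanishings yields the required finite sum, and the upgrade from Laurent polynomial to genuine polynomial in $q^{s}$ asserted by the theorem comes out of the same accounting of support bounds, possibly after absorbing a monomial factor into the normalization.

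The main obstacle is precisely this lower-end vanishing: it is the step where the real work of the theorem is done, requiring a case-by-case computation of $J^{\xi,\eta}_\pi$ through the Manderscheid model, while the shell decomposition and the upper-end vanishing are essentially formal consequences of local constancy and the equivariance of the Bessel function.
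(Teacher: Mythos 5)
Your shell decomposition is the right first step and matches the paper's reduction to showing that the shell integrals vanish outside a finite window. But you have apportioned the difficulty backwards, and the argument you give for the half that carries the actual content of the theorem does not work.

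The upper-end vanishing is not formal, because $J^{\xi,\eta}_\pi(\ul{x}w)$ does \emph{not} vanish pointwise for large $|x|$. In the compactly induced (Manderscheid) model the paper derives the integral formula \eqref{J-sekibunkoushiki},
\begin{align*}
J_{\pi}^{\xi,\eta}(\ul{x}w)=\int_{\varpi^n O}|\si(\ul{xy^{-1}})\cb'|_\cb\,(x^{-1}y,y^{-1})\,\psi^\xi(-x^2y^{-1}-\xi^{-1}\eta y)\,dy \qquad (|x|=q^{-n},\ n\le -l),
\end{align*}
from which one only gets the bound $|J_{\pi}^{\xi,\eta}(\ul{x}w)|\le C\,|x|$ (Proposition \ref{Bessel-asymp}); this linear growth is precisely why the gamma factor has to be defined as an improper limit rather than as a convergent Mellin integral. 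What vanishes for $|x|$ large is the \emph{shell integral}, and it vanishes by oscillatory cancellation: after reducing the double integral over $O^\times\times O^\times$ modulo $1+P^{m'}$, everything collapses to $\int_{P^{l-n-m'}}\psi^\xi(-2\varpi^n c^2 u x)\,dx=0$ once $n\le l-2m'$ --- a quadratic Gauss-sum computation that genuinely uses the explicit model. Your proposed shortcut breaks at the claim that $J^{\xi,\eta}_\pi$ is right-invariant under $n^{-}(P^{N})$ for a fixed $N$: a Bessel function is only locally constant with a level that degrades as the torus variable grows, so no uniform $N$ exists. If it did, your identity $J(\ul{x}wn^{-}(c))=\psi(-\xi cx^2)J(\ul{x}w)$ would make $J(\ul{x}w)$ compactly supported in $|x|$, contradicting the growth above and rendering the improper-limit definition pointless.

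Conversely, the lower-end vanishing, which you flag as the place where the real work happens, is the trivial half: for $\phi=\phi^{e}_{\cb'}$ supported on $\ol{H}=\ol{\SL_2(O)}$, the matrix $n(z)\ul{x}wn(y)$ has lower-left entry $x^{-1}$, so it can lie in $\ol{H}$ only when $|x|\ge 1$; hence $J^{\xi,\eta}_\pi(\ul{x}w)=0$ for $x\in P$ outright (this is \eqref{J-hyouka1}), the surviving shells are exactly $1\le|x|\le q^{M}$, and that is also why the answer is a genuine polynomial in $q^{s}$ with no negative powers --- no monomial needs to be absorbed. In short, the skeleton of your argument is fine, but the essential analytic input --- cancellation in the shell integrals at infinity against a non-vanishing, growing Bessel function --- is missing, and the step you substitute for it is false.
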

Our second main theorem is the following, which is proved in \S\ref{sec:FE}. 
\begin{thm} \label{thm-FE}
 Suppose $(\pi,V)$ is an irreducible genuine supercuspidal representation of $\ol{S}$. Let $v\in V$ and $\mu$ be a character of $k\t$. Then the zeta functions satisfy the functional equation
$$Z(s,\mu, l^\xi,\pi(w)v)=\frac{1}{4}\sum_{\eta\in k(\pi)/(k^\times)^2}|\eta|\Ga^{\xi,\eta}_{\pi,\mu}(s)Z(1-s,\mu^{-1}, l^\eta,v), \quad \xi \in k(\pi).
$$
\end{thm}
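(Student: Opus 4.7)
My plan is to reduce the functional equation to a \emph{Bessel reproducing identity} on the Whittaker model. Schematically, I expect an identity of the form
\begin{equation*}
l^\xi(\pi(\dm{x}w)v)\;=\;\frac{1}{4}\sum_{\eta\in k(\pi)/(k^\times)^2}|\eta|\int_{k^\times}J^{\xi,\eta}_\pi(\dm{x}\,\dm{y^{-1}}\,w)\,l^\eta(\pi(\dm{y})v)\,d^\times y,
\end{equation*}
up to corrections imposed by the metaplectic cocycle and the genuine character $\chi_\psi$. Such an identity exhibits $J^{\xi,\eta}_\pi$ as the kernel of the intertwiner realizing $\pi(w)$ on the direct sum of Whittaker models indexed by $k(\pi)/(k^\times)^2$, and should be a direct consequence of the definition of the Bessel function recalled in \S\ref{sec:Bessel} together with the uniqueness of $\psi^\eta$-Whittaker functionals. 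The supercuspidality hypothesis on $\pi$ enters here in an essential way: it forces every Whittaker function $y\mapsto l^\eta(\pi(\dm{y})v)$ to be compactly supported on $k^\times$, so the inner integral is a finite sum and no convergence issues arise.

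Given the reproducing identity, I would substitute it into the defining integral for $Z(s,\mu,l^\xi,\pi(w)v)$, truncate the $x$-domain to $P^{-n}\setminus P^m$ to match the principal-value definition of $\Ga^{\xi,\eta}_{\pi,\mu}(s)$, and change variables (roughly $x=uy$, possibly after first replacing $y$ by $y^{-1}$) to decouple $\dm{x}$ from $\dm{y}$. The cocycle identity $\chi_\psi(uy)=\chi_\psi(u)\chi_\psi(y)(u,y)$, where $(\cdot,\cdot)$ denotes the Hilbert symbol, splits the character factor, and Fubini is legitimate because of the compact support coming from supercuspidality together with the polynomial-in-$q^{\pm s}$ nature of $Z(s,\mu,l^\eta,v)$ established in Theorem \ref{thm-ZetaInt}. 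The $u$-factor converges, as $n,m\to\infty$, to $\Ga^{\xi,\eta}_{\pi,\mu}(s)$ by \eqref{Intro-f1}, while the remaining $y$-integral — after the Hilbert symbol and the substitution $y\mapsto y^{-1}$ have transformed $\mu(y)|y|^{s-\frac{1}{2}}$ into $\mu^{-1}(y)|y|^{\frac{1}{2}-s}$ — becomes exactly $Z(1-s,\mu^{-1},l^\eta,v)$, as required.

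The main obstacle is establishing the precise form of the Bessel reproducing identity, including the normalizing factor $\tfrac{1}{4}|\eta|$. This requires careful bookkeeping of the metaplectic cocycle and the interaction between the Hilbert symbol and the decomposition of $k^\times$ into square classes: the factor $(u,y)$ arising from $\chi_\psi(uy)$ must be absorbed into the transformation law of $J^{\xi,\eta}_\pi$ under the diagonal torus in just the right way to effect both the inversion $\mu\mapsto\mu^{-1}$ and the sign change $s-\tfrac{1}{2}\mapsto \tfrac{1}{2}-s$ in the $y$-integral. A secondary technical issue is justifying the interchange of the limit $n,m\to\infty$ with the $y$-integration; supercuspidality supplies the uniform control needed here, since it pins down the support of both the Whittaker and Bessel function sides of the computation and forces the truncation error to vanish uniformly on the $y$-side.
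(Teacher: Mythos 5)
Your overall architecture coincides with the paper's: a Bessel reproducing identity expressing $W^\xi_v(\dm{a}w)$ as a sum over $\eta\in k(\pi)/(k^\times)^2$ of integrals of $J^{\xi,\eta}_\pi$ against $W^\eta_v(\dm{y})$, followed by substitution into the zeta integral, a multiplicative change of variables, and Fubini. The genuine gap is in how you propose to obtain that identity. It is not ``a direct consequence of the definition of the Bessel function together with the uniqueness of Whittaker functionals'': the definition \eqref{Def-BesselFt} only gives the \emph{forward} Fourier coefficients $\int^+_kW^\xi_v(\dm{a}wn(x))\psi^\eta(-x)\,dx=J^{\xi,\eta}_\pi(\dm{a}w)W^\eta_v(e)$, and to recover $W^\xi_v(\dm{a}w)$ from these one must apply additive Fourier \emph{inversion} to $\phi_a(x):=W^\xi_v(\dm{a}wn(x))$, writing $W^\xi_v(\dm{a}w)=\wh{\wh{\phi_a}}(0)$ and then decomposing the outer additive integral over $k$ into square classes $\eta(k^\times)^2$; this is precisely what produces the sum over $\eta$ with weight $\tfrac{|\eta|}{2}$ (note: $\tfrac12$, not $\tfrac14$ --- the $\tfrac14$ in the theorem appears only at the end, from the two normalizing factors of $2$ in the definitions of $Z$ and $\Gamma^{\xi,\eta}_{\pi,\mu}$). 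Fourier inversion in turn requires knowing $\phi_a\in\Scal(k)$, i.e.\ that $x\mapsto W^\xi_v(\dm{a}wn(x))$ has compact support; this is a separate lemma, proved via the Iwasawa decomposition of $wn(x)$ for $|x|>1$ together with the compact support of torus restrictions of Whittaker functions, and it is also what turns the improper integral in \eqref{Def-BesselFt} into an absolutely convergent one. Your proposal contains no substitute for this step. A secondary but real defect: with your kernel $J^{\xi,\eta}_\pi(\dm{x}\,\dm{y^{-1}}w)$ the substitution $x=uy$ leaves the $y$-factor as $\mu(y)|y|^{s-\frac12}$, so the $y$-integral produces $Z(s,\mu,l^\eta,v)$ rather than $Z(1-s,\mu^{-1},l^\eta,v)$; the Hilbert symbol cannot repair this. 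The correct kernel pairs $J^{\xi,\eta}_\pi(\dm{xy}w)\,(xy,y)$ with $W^\eta_v(\dm{y})$, so that $x\mapsto xy^{-1}$ effects $\mu\mapsto\mu^{-1}$, $s\mapsto 1-s$.

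On convergence your route also differs from the paper's and is under-justified as stated. The paper does not truncate: it proves that $J^{\xi,\eta}_\pi(\dm{x}w)$ vanishes for $x\in O\setminus\{0\}$ and satisfies $|J^{\xi,\eta}_\pi(\dm{x}w)|\le C\max(1,|x|)$, so that for $\Re(s)<-1/2$ the double integral converges absolutely, Fubini applies, the $x$-integral is literally $\tfrac12\Gamma^{\xi,\eta}_{\pi,\mu}(s)$, and the functional equation extends to all $s$ by analytic continuation because both sides are polynomials in $q^{\pm s}$ (Theorems \ref{thm-gamma} and \ref{thm-ZetaInt}). If you insist on truncating to $P^{-n}\setminus P^m$ and passing to the limit inside the $y$-integral, the interchange is not supplied by ``supercuspidality'' in the abstract: after $x=uy$ the truncation window in $u$ depends on $y$, and you need the vanishing of the shell integrals $\gamma^{\xi,\eta}_{\pi,\mu}(n)$ outside a bounded range of $n$ --- i.e.\ the content of Theorem \ref{thm-gamma} --- to see that the truncated $u$-integrals stabilize uniformly for $y$ in the (compact) support of $W^\eta_v(\dm{\cdot})$. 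Either fix is acceptable, but one of them must be made explicit.
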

In \S\ref{sec:FE}, we introduce a prehomogenous vector space $(PD^\times \times k^\times,V_D)$ associated with a quaternion algebra $D$ over $k$. Our key observation can be stated as, when $\pi\boxtimes \tilde \pi$ is an irreducible admissible representation of $P D^\times \times \ol{S}$ such that $\pi$ and $\tilde \pi$ are related by the local theta-correspondence, the local zeta-distribution with $\pi$-spherical functions (see \eqref{Def-LZDSpherical}) is directly related to the ``local zeta-functional'' $v\mapsto Z(s,\mu,l^\xi,v)$ on the $\psi^\xi$-Whittaker model of $\tilde\pi$ (see Proposition \ref{ThetaLift-L1} in detail). By using this relationship and our main results (Theorems \ref{thm-gamma} and \ref{thm-FE}), we deduce a property of the gamma matrix describing the local functional equation for the zeta-integrals with $\pi$-spherical function when $\pi$ is supercuspidal. We should remark that the functional equation itself has been known to hold by a general theorey developed in \cite{Li}.

\section{The metaplectic cover of $\SL_2(k)$ and its representations}
In this section, we introduce the metaplectic cover of $S:=\SL_2(k)$ and recall the definition of the Bessel function on the covering group $\ol{S}$. For details, we refer to \cite{BM}. 
\subsection{The mataplectic cover of $S$}\label{sec:Cover}\dw
Let $(a,b)$ $(a,b\in k^\times)$ be the Hilbert symbol of $k^\times$, i.e., 
$$(a,b):=\begin{cases}1 &\quad (\text{if $a=x^2-by^2$ for some $(x,y)\in k$}), \\
-1& \quad (\text{otherwise}).\end{cases}
$$
Set
\begin{align*}
\chi(g)&:=
\begin{cases}c &c\neq 0 \\ 
d & c=0\end{cases}
\end{align*}
for $g =\left(\begin{smallmatrix}a&b\\ c&d\\ \end{smallmatrix} \right)\in S$, and define 
$$
\{g,h\}:=\left(\frac{\chi(gh)}{\chi(g)},\frac{\chi(gh)}{\chi(h)}\right)\quad g,h\in S. 
$$
The metaplectic double cover $\ol{S}$ of $S$ is obtained by defining the group law on the set $\ol{S}:=\{[g,\ep]\mid g\in S,\sp \ep=\pm1\}$ as 
$$
[g,\ep_1][h,\ep_2]:=[gh,\{g,h\}\ep_1\ep_2]\quad([g,\ep_1],[h,\ep_2]\in\ol{S}).$$Endowed with this product, the set $\ol{S}$ becomes a totally disconected group and the first projection $\ol{S}\rightarrow S$ is a double cover. It is known that the center of $\ol{S}$ is 
$$
Z:=\{[\e 1_2, \e']\mid \e,\e'\in \{\pm 1\}\},
$$
which is a finite group of order $4$ isomorphic to $\Z/4\Z$ if $(-1,-1)=-1$ and to the Klein $4$-group $\Z/2\Z\times \Z/2\Z$ if $(-1,-1)=1$. The kernel of $\ol{S}\rightarrow S$ is $\{[1,\e']\mid \e'\in \{\pm 1\}\}$ is a subgroup of $Z$ of oeder $2$. 

In what follows, the element $[g,1]\in\ol{S}$ for $g\in S$ is denoted by $g$, and $[\left(\begin{smallmatrix} 1 & 0 \\ 0 & 1 \end{smallmatrix} \right),\ep]\in\ol{S}$ for $\ep=\pm1$ by $\ep$. For any subset $I\subset S$, define $\ol{I}:=\{[g,\ep]\in\ol{S}\mid g\in I,\sp\ep=\pm1\}$.

\subsection{Genuine characters of $\ol{A}$} \label{sec:charA}\dw
Set 
\begin{align}
\chi_{\psi}(a)=\frac{\alpha_{\psi}(1)}{\alpha_{\psi}(a)}=\frac{\alpha_{\psi}(a)}{{\alpha_\psi}(1)}(a,-1) \qquad (a\in k^\times).
\label{Def-GenChar}
\end{align} 
Note that $\chi_{\psi}(a^2)=1\,(a\in k^\times)$. We have 
\begin{align}
\chi_{\psi}(ab)=\chi_{\psi}(a)\,\chi_{\psi}(b)\,(a,b), \quad a,b\in k^\times.
\label{GenChar-f0}
\end{align}
Set $A:=\{\left(\begin{smallmatrix} a & 0 \\ 0 & a^{-1}\end{smallmatrix}\right)\mid a\in k^\times\}\,(\subset S)$. Note that $Z\subset \ol{A}$. A quasi-character $\chi$ of $\ol{A}$ is said to be genuine if $\chi|Z$ is non-trivial. The map 
\begin{align}
[{\underline a}, \e]\longmapsto \chi_{\psi}(a)\,\e
 \label{charA-f0}
\end{align}
is a genuin character of $\ol{A}$ denoted also by $\chi_{\psi}$, and $\chi_{\psi}(a)^{2}=(a,-1)$ for $a\in k^\times$, which in turn means that the character \eqref{charA-f0} is of order at most $4$. A quasi-character $\mu$ of $k^\times$ is identified with a quas-character $[{\underline t},\e]\mapsto \mu(t)$ of $\ol{A}$. Let $\fA$ denote the set of all the genuine quasi-characters of $\ol{A}$. Any element $\chi \in \fA$ is written in a unique way as $\chi_{\psi}\mu|\cdot|^{s}$ with $\mu\in \widehat{k^\times}$ and $s\in \C/2\pi\sqrt{-1}(\log q)^{-1}\Z$.
%; set $\Re(\chi)=\Re(s)\in \R$. As usual the set $\fA$ is endowed with a natural structure of complex manifold in such a way that for any $\chi$ the map $s\mapsto \chi|\cdot|_{F}^{s}$ is an analytic isomorphism from $\C/2\pi(\log q)^{-1}\sqrt{-1}\Z$ onto the connected component of $\fA$ containing $\chi$. 
Let us fix a Haar measure on $\ol{A}$ by 
\begin{align}
\int_{\ol{A}} f(a)\,\d a=\int_{k^\times}f([{\underline a}])\,d^\times a+
\int_{k^\times}f([{\underline a},-1])\,d^\times a
\label{HaarA}
\end{align}
where $d^\times a=\zeta_{k}(1)|a|^{-1}\d a$.

\subsection{Bessel function} \label{sec:Bessel} \dw
Let $C^\inf(k)$ denote the space of locally constant functions on $k$. For $\phi\in C^\inf(k)$, we define the improper integral of $\phi$ on $k^\times$ as the limit 
$$\int^+_k\phi(x)dx:=\lim_{n\to\inf}\int_{P^{-n}}\phi(x)dx
$$
if the limit exists. 

Let $(\pi,V)$ be an irreducible admissible representation of $\ol{S}$; we suppose that $\pi$ is genuine, or equivalently 
\begin{align}
\omega_{\pi}([1,\e])=\e\quad \e\in \{+1,-1\}, 
 \label{Genuine}
\end{align}
where $\omega_\pi:Z \rightarrow \C^\times$ is the central character of $\pi$. Fix a nontrivial additive character $\psi$ of $k$. For $\xi\in k\t$, an additive character $\psi^\xi$ is defined as 
$$\psi^\xi(a):=\psi(\xi a)\quad(a\in k). $$
We fix a system of non-trivial $\psi^\xi$-Whittaker functionals $\{l^\xi\}_{\xi \in k(\pi)}$ on $V$ once and for all; then by the multiplicity one theorem for the Whittaker functionals, for each $\xi\in k(\pi)$, we have a unique map $c_\xi:k^\times \rightarrow \C^\times$ such that  
\begin{align}
l^{\xi}(\pi(\ul{a})v)=c_\xi(a)\,l^{a^2\xi}(v), \quad \xi \in k(\pi),\,a\in k^\times.
\label{Bessel-f0}
\end{align}
For $v\in V$ and $\xi \in k(\pi)$, we define a $\psi^\xi$-Whittaker function $W^\xi_v: \ol{S} \longrightarrow \C$ by
$$W^\xi_v(g):=l^\xi(\pi(g)v), \quad g\in \ol{S}.$$
Define
$$B_S:=\{n(a)\dm{b}\mid a\in k,\sp b\in k\t\},  \qquad N:=\{n(a)\mid a\in k\}.$$Let $\xi\in k(\pi),\sp \eta\in k\t$ and $g\in\ol{B}_SwN$. Then, it is known (\cite[Proposition 7.4]{BM}) that the improper integral $\int^+_kW^\xi_v(gn(x))\psi^{\eta}(-x)dx$ converges for all $v\in V$, and the linear map $v\to\int^+_kW^\xi_v(gn(x))\psi^{\eta}(-x)dx$ is a $\psi^\eta$-Whittaker functional on $V$. From this observation, we have
$$\int^+_kW^\xi_v(gn(x))\psi^{\eta}(-x)dx=0\quad(v\in  V)$$
for $\eta \in k -k(\pi)$, and has a constant $J^{\xi,\eta}_\pi(g)\in\C$ satisfying
\begin{align}
\int^+_kW^\xi_v(gn(x))\psi^{\eta}(-x)dx=J^{\xi,\eta}_\pi(g)W^\eta_v(e)\quad(v\in V)
\label{Def-BesselFt}
\end{align}
for $\eta \in k(\pi)$. The number $J^{\xi,\eta}_\pi(g)$, viewed as a function on $\ol{B}_SwN$, is called the Bessel function of $\pi$. Note that $J^{\xi,\eta}_\pi$ depends on the choice of $\{l^{\xi}\}_{\xi \in k(\pi)}$. 

Let $\xi,\eta\in k(\pi)$ and $t,u\in k\t$. Then, by \eqref{Bessel-f0}, making a change of variables, we have
\begin{align}
J^{t^2\xi,u^2\eta}_\pi(\dm{a}w)=c_{\eta}(u)c_\xi(t)^{-1}\,(u,-1)|u|^{-2}J^{\xi,\eta}_\pi(\dm{a}\hspace{1pt}\dm{t}\hspace{1pt}\dm{u}w)\quad(a\in k\t).
  \label{Bessel-f1}
\end{align}

\section{Bessel function and its Mellin transform}
 In this section, first, we briefly recall an explicit construction of supercuspidal representations on $\ol{S}$ from \cite{Man84-I}, which is crucial in our computation. Then, we define a gamma factor as the Mellin transform of the Bessel funcion, and examine the gamma factor of a supercuspidal representation by using the results of \cite{Man84-I}. 
\subsection{A construction of supercuspidal representations}\dw
Due to assumption \eqref{Odd}, all irreducible supercuspidal representations of $\ol{S}$ are constructed by the method in \cite{Man84-I}, \cite{Man84-II}. Recall the main result of \cite{Man84-I}. 

For $n\in \Z_{\geq 0}$, set
$$H_{1,n}:=\{g\in {\rm SL}_2(O) \mid g\equiv1\mod P^n\},\quad H_{2,n}:=\begin{pmatrix}0&1\\ \vpi&0\\ \end{pmatrix}\iv H_{1,n}\begin{pmatrix}0&1\\ \vpi&0\\ \end{pmatrix}.
$$
Set $H_1:=H_{1,0}={\rm SL}_2(O)$ and $H_2:=H_{2,0}$. 
Let $(\si,W)$ be a finite dimensional smooth representation of $\ol{H_i}\subset\ol{S}$. We define a compactly induced representation $\left({\rm c Ind}^{\ol{S}}_{\ol{H_i}}(\si),W'\right)$ by
$$W':=\{\phi:\ol{S}\to W\mid\phi\text{ is compactly supported on }\ol{H_i}\bsl \ol{S},\sp\phi(hg)=\si(h)\phi(g)\quad(g\in \ol{S},\sp h\in \ol{H_i})\},$$
$$[\cInd^{\ol{S}}_{\ol{H_i}}(\si)(g)\phi]\,(h):=\phi(hg)\quad(\phi\in W',\sp g,h\in \ol{S}).$$
Let $l$ be the conductor of $\si$, i.e., $l$ is the smallest positive integer for which $\si$ is trivial on $H_{i,l}$. We say that $\si$ is strongly cuspidal if $$
 \int_{P^{l-2i+1}}\si(n(x))vdx=0 \quad \text{for all $v\in V$}. 
$$

Below, we quote \cite[Theorem 1.4]{Man84-II}:
\begin{thm} \label{thm-Man84}
The compactly inducuction from an irreducible strongly cuspidal representation of $\ol{H_i}$ yields an irreducible supercuspidal representation of $\ol{S}$. Any irreducible supercuspidal representation of $\ol{S}$ is obtained by this construction from an irreducible strongly cuspidal representation of $\ol{H_i}$ for either $i=1$ or $i=2$. 
\end{thm}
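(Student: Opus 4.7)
The plan is to verify the two assertions separately: first, that $\cInd^{\ol{S}}_{\ol{H_i}}(\si)$ is irreducible and supercuspidal whenever $\si$ is irreducible and strongly cuspidal; second, that every irreducible supercuspidal representation of $\ol{S}$ is exhausted by this construction.

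For the first assertion, I would invoke Mackey's irreducibility criterion: since $\ol{H_i}$ is open and compact modulo the center $Z$, the induced representation $\cInd^{\ol{S}}_{\ol{H_i}}(\si)$ is irreducible provided that for every $g\in \ol{S}\setminus \ol{H_i}$, the representations $\si$ and $\si^g:=\si(g\cdot g^{-1})$ of the intersection $\ol{H_i}\cap g\ol{H_i}g^{-1}$ are disjoint. First I would parametrize double cosets $\ol{H_i}\bsl \ol{S}/\ol{H_i}$ using the Cartan-type decomposition of $S$ under $H_i$ (the conjugacy of $H_1, H_2$ to the two classes of maximal parahoric subgroups making the affine Bruhat--Tits tree of $\mathrm{PGL}_2(k)$ useful here); the nontrivial double cosets are indexed by integral powers of $\vpi$. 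For such a representative $g=\dm{\vpi^n}$ with $n\ne0$, the intersection $\ol{H_i}\cap g\ol{H_i}g^{-1}$ contains the unipotent subgroup $\{n(x)\mid x\in P^{l-2i+1}\}$ (for appropriate scaling), on which the strongly cuspidal condition $\int_{P^{l-2i+1}}\si(n(x))v\,dx=0$ forces any intertwiner between $\si$ and $\si^g$ to vanish. Supercuspidality of the induced representation is then automatic: matrix coefficients are, by construction, compactly supported modulo $Z$.

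For the converse, I would begin with an arbitrary irreducible supercuspidal $(\pi,V)$ and locate a minimal $\ol{H_i}$-type inside $V$. Concretely, using that $\pi$ is admissible, the spaces $V^{\ol{H_{i,n}}}$ of $\ol{H_{i,n}}$-fixed vectors are finite-dimensional and stabilize as $n$ grows; one picks $i\in\{1,2\}$ and $n$ minimal for which $V^{\ol{H_{i,n}}}$ is nonzero, and then picks an irreducible subquotient $\si$ of the natural $\ol{H_i}/\ol{H_{i,l}}$-action on a suitable subquotient built from $V^{\ol{H_{i,n}}}$. Supercuspidality of $\pi$ translates into the strongly cuspidal condition on $\si$ via the integral formula defining supercuspidality evaluated against $\si$-isotypic vectors. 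By Frobenius reciprocity one obtains a nonzero $\ol{S}$-equivariant map $\cInd^{\ol{S}}_{\ol{H_i}}(\si)\to \pi$, and irreducibility of both sides (the source by the first part of the theorem) upgrades this into an isomorphism.

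The hardest step will be the Mackey criterion, i.e., ruling out intertwiners for \emph{all} nontrivial double coset representatives simultaneously, since the bicovering structure makes the cocycle $\{\cdot,\cdot\}$ interact with the conjugation in a way that is not completely transparent; one must track how $\si^g$ transforms under the metaplectic conjugation and verify the cocycle does not produce unexpected coincidences $\si\simeq \si^g$ on the intersection. A secondary obstacle is the extraction of the strongly cuspidal minimal type in the converse direction: one needs that the passage from $V^{\ol{H_{i,n}}}$ to an irreducible $\ol{H_i}$-representation is compatible with the integrated supercuspidality condition, which is done by a delicate filtration argument exploiting the two conjugacy classes of maximal compact open subgroups (parametrized by $i=1,2$). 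With those two pieces in hand, the rest of the argument is a formal bookkeeping using Frobenius reciprocity.
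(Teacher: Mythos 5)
The paper does not prove this statement: it is quoted verbatim as \cite[Theorem 1.4]{Man84-II} (under the standing assumption that $q$ is odd), so there is no internal proof to compare your argument against. What you have written is an outline of how one would reprove Manderscheid's theorem, and your strategy --- Mackey's criterion for irreducibility of the compactly induced representation, plus an exhaustion argument locating a strongly cuspidal minimal type for the converse --- is indeed the standard route and essentially the one Manderscheid follows.

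As a proof, however, the outline has genuine gaps. First, for a genuine representation $\pi$ the space $V^{\ol{H_{i,n}}}$ of vectors fixed by the full preimage $\ol{H_{i,n}}$ is always zero, since $[1,-1]$ acts by $-1$; you must instead work with a splitting of the cover over $H_{i,n}$ (available for these subgroups when $q$ is odd) or with genuine isotypic components, and this is precisely where the cocycle bookkeeping you defer has to be carried out. Second, the passage from supercuspidality of $\pi$ (``for each $v$ there is \emph{some} $n$ with $\int_{P^{n}}\pi(n(x))v\,dx=0$'') to strong cuspidality of the extracted type (vanishing of the average over the \emph{specific} level $P^{l-2i+1}$ tied to the conductor $l$ of $\si$) is not a translation but the heart of the exhaustion argument; your ``delicate filtration argument'' is a placeholder for the substantive part of Manderscheid's paper. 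Third, in the Mackey step you must identify exactly which unipotent subgroup of $\ol{H_i}\cap g\ol{H_i}g^{-1}$ for $g=\dm{\vpi^n}$, $n>0$, carries a trivial $\si^{g}$-action while $\si$ has vanishing average on it, and check that the metaplectic cocycle is trivial there; the containment you assert is stated without the rescaling that makes it work. None of these is a wrong idea, but as written the proposal is a roadmap rather than a proof; for the purposes of this paper the correct move is simply to cite Manderscheid, as the authors do.
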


\subsection{Gamma factor}\label{sec:Gamm}\dw
For $a\in k$, the Weil constant $\al(a)\in\C$ is defined by the relation: 
$$\int_k\hat{\Phi}(x)\psi(ax^2)dx=|a|^{-\ha}\al(a)\int_k\Phi(x)\psi(-a\iv x^2)dx\quad(\Phi\in C^\inf_c(k)),$$
where $\wh{\Phi}$ is the Fourier transform of $\Phi$ defined by 
$$\hat{\Phi}(a):=\int_k\Phi(x)\psi(-2ax)dx\quad(a\in k).$$
%Set
%\begin{align}
%\chi_\psi(a):=(-1,a)\al(a)\al(1)\iv\quad(a\in k\t). \label{Def-GenChar}
%\end{align}
%Then, a straightforward computation shows the relation 
%$$
%\chi_{\psi}(ab)=\chi_{\psi}(a)\chi_{\psi}(b)(a,b), \quad a,b\in k^\times,
%$$
%which menas the function $[\left(\begin{smallmatrix} a & 0 \\ 0 & a^{-1} \end{smallmatrix} \right),\e]\mapsto \e \chi_{\psi}(a)$ is a genuine character of $\ol{A}$, where $A:=\{\left(\begin{smallmatrix} a & 0 \\ 0 & a^{-1} \end{smallmatrix} \right)\mid a\in k^\times\}$. 

Let  $\pi$ be a genuine irreducible supercuspidal representation of $\ol{S}$. Let $\xi,\eta\in F(\pi)$ and $\mu$ a character of $k\t$. For $n\in \Z$, set
$$
\gamma_{\pi,\mu}^{\xi,\eta}(n):=2q^{-n/2}\int_{|x|=q^{n}}J^{\xi,\eta}_\pi(\dm{x}w)\chi_\psi(x)\mu(x)d\t x. 
$$
For $s\in \C$, consider the limit
\begin{align}
\Ga^{\xi,\eta}_{\pi,\mu}(s):=\lim_{\substack{n\rightarrow \infty \\ m\rightarrow \infty}} 2\int_{P^{-n}-P^m}J^{\xi,\eta}_\pi(\dm{x}w)\chi_\psi(x)\mu(x)|x|^{s-\ha}d\t x
 \label{DefGammaFactor}
\end{align}
where $w:=[\left(\begin{smallmatrix}0&-1\\1&0\\\end{smallmatrix}\right),1]$. Note that $\gamma_{\pi,\mu}^{\xi,\eta}(n)$ and $\Ga^{\xi,\eta}_{\pi,\mu}(s)$ depend on the choice of $l^{\xi}$ and $l^{\eta}$. The following is one of our main theorems. 
%Theorem 2 is a direct consequence of  theorem 4.
\begin{thm} \label{thm-gamma}
Let  $\pi$ be a genuine irreducible supercuspidal representation of $\ol{S}$. Then for $\xi,\eta \in k(\pi)$, the limit \eqref{DefGammaFactor} exists for any $s\in\C$; there exists $M\in \Z_{>0}$ such that  
\begin{align}
\Ga^{\xi,\eta}_{\pi,\mu}(s)=\sum_{n=0}^{M} \gamma_{\pi,\mu}^{\xi,\eta}(n)\,q^{ns}, \quad s\in \C,\,\xi,\eta\in X(\pi). 
\label{thm-gamma-F}
\end{align}
In particular, $\Gamma_{\pi,\mu}^{\xi,\eta}(s)$ is a polynomial in $q^{s}$. 
\end{thm}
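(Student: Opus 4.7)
The plan is to reduce the theorem to a compact-support statement for the function $x\mapsto J^{\xi,\eta}_\pi(\dm{x}w)\chi_\psi(x)\mu(x)$ on $k^\times$. Decomposing $P^{-n}\setminus P^m = \bigsqcup_{k=-m+1}^n\{|x|=q^k\}$ and using $|x|^{s-\ha}=q^{k(s-\ha)}$ on each annulus, the integral appearing in \eqref{DefGammaFactor} collapses to
\begin{equation*}
2\int_{P^{-n}\setminus P^m}J^{\xi,\eta}_\pi(\dm{x}w)\chi_\psi(x)\mu(x)|x|^{s-\ha}\,d^\times x=\sum_{k=-m+1}^{n}\gamma_{\pi,\mu}^{\xi,\eta}(k)\,q^{ks}.
\end{equation*}
Hence it suffices to prove that $\gamma_{\pi,\mu}^{\xi,\eta}(k)=0$ for $k<0$ and for all sufficiently large $k>M$; the limit as $n,m\to\infty$ then exists, is independent of truncation, and equals the polynomial \eqref{thm-gamma-F}.

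For the upper-range vanishing ($k>M$, $|x|$ large) I would invoke Theorem \ref{thm-Man84} to write $\pi=\cInd^{\ol{S}}_{\ol{H_i}}(\sigma)$ and choose a generating vector $v\in V$ supported on a single coset of $\ol{H_i}$. The $\psi^\xi$-Whittaker functional $l^\xi$ can then be realized concretely in the induced model, and the resulting Whittaker function $W^\xi_v(\dm{a})$ is compactly supported in $a\in k^\times$, which is a standard feature of supercuspidal representations. Substituting $g=\dm{x}w$ in \eqref{Def-BesselFt} and performing the Bruhat decomposition of $\dm{x}w\,n(t)$ inside the integrand, one sees that the support of $W^\xi_v$ on the diagonal forces $J^{\xi,\eta}_\pi(\dm{x}w)=0$ once $|x|$ exceeds a bound controlled by the conductor of $\sigma$.

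For the lower-range vanishing ($k<0$, $|x|$ small) the strong cuspidality condition $\int_{P^{l-2i+1}}\sigma(n(t))v\,dt=0$ is the key input. After the substitution $x=\vpi^{-k}y$ with $y\in O^\times$, the annular integral is rewritten as a Fourier-type integral of $W^\xi_v$ along $N$-translates, using \eqref{Def-BesselFt} and the right $N$-equivariance $J^{\xi,\eta}_\pi(g\,n(t))=\psi(\eta t)J^{\xi,\eta}_\pi(g)$. For $k$ sufficiently negative, the scale of the resulting ball of integration in $t$ fits inside the strong-cuspidality ball $P^{l-2i+1}$, and the averaging against $\sigma(n(t))v$ produces zero. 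The quadratic transformation formula \eqref{Bessel-f1} together with $\xi,\eta\in X(\pi)$ then reduces the bookkeeping to finitely many representative pairs.

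The main obstacle is precisely this small-$|x|$ case: the strong cuspidality of $\sigma$, which lives on the inducing compact-mod-center subgroup $\ol{H_i}$, must be propagated through the compactly-induced model and translated into a vanishing statement for the Bessel function on the Bruhat cell $\dm{x}w$. Throughout, one must carefully track the metaplectic cocycle, the genuine character $\chi_\psi$ defined via the Weil constants $\alpha_\psi$, and the scaling functions $c_\xi$ from \eqref{Bessel-f0}, none of which appear in the linear $\SL_2$ analogue. The large-$|x|$ half is more routine and follows the classical argument via compact support of Whittaker functions of supercuspidals.
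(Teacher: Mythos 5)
Your reduction to the vanishing of the annular coefficients $\gamma_{\pi,\mu}^{\xi,\eta}(n)$ outside a finite range, and your use of the Manderscheid model $\pi=\cInd^{\ol{S}}_{\ol{H_i}}(\sigma)$ with an explicit Whittaker functional, both match the paper. But you have the two halves of the argument exactly backwards, and the half you dismiss as ``routine'' contains a false claim. The small-$|x|$ range is the trivial one: taking the test vector $\phi=\phi^e_{\cb'}$ supported on $\ol H$, the integrand $\phi(n(z)\dm{x}wn(y))$ vanishes identically once $x\in P$, because the bottom row of $n(z)\dm{x}wn(y)$ is $(x^{-1},x^{-1}y)$ and cannot lie in $O$ when $|x|<1$; hence $J^{\xi,\eta}_\pi(\dm{x}w)=0$ pointwise for $x\in P$, with no appeal to strong cuspidality and no ``propagation'' argument needed.

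The genuine gap is in your large-$|x|$ range. You assert that compact support of $W^\xi_v$ on the diagonal torus forces $J^{\xi,\eta}_\pi(\dm{x}w)=0$ for $|x|$ large. This is false: the Bessel function is evaluated on the big Bruhat cell, not on the torus, and it does \emph{not} vanish for large $|x|$ --- the paper's integral formula \eqref{J-sekibunkoushiki} exhibits it for $|x|\geq q^{l}$ as an oscillatory integral $\int_{\varpi^nO}|\sigma(\ul{xy^{-1}})\cb'|_{\cb}\,(x^{-1}y,y^{-1})\,\psi^\xi(-x^2y^{-1}-\xi^{-1}\eta y)\,dy$, which generically grows like $|x|$ (consistent with the bound $|J^{\xi,\eta}_\pi(\dm{x}w)|\leq C\max(1,|x|)$ of Proposition \ref{Bessel-asymp}). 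What vanishes for $|x|>q^{M}$ is only the \emph{annular average} against $\chi_\psi(x)\mu(x)$, and proving this is the real content of the theorem: one substitutes \eqref{J-sekibunkoushiki} into $\Gamma_n(s)$, uses local constancy of $\sigma(\ul{a})$, $\mu$, the Hilbert symbol and $\chi_\psi$ on $1+P^{m'}$ to reduce to double integrals $\int\int\psi^\xi(-\varpi^n y(x^2+\xi^{-1}\eta))\,dy\,d^\times x$ over cosets of $1+P^{m'}$, and then kills these for $n\leq l-2m'$ by linearizing $x\mapsto c(1+x)$ and invoking the vanishing of $\int_{P^{l-n-m'}}\psi^\xi(-2\varpi^nc^2ux)\,dx$, which relies on $\psi^\xi|_O$ having exact conductor $l$ (i.e.\ $\xi\in X(\pi)$). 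None of this cancellation mechanism appears in your proposal, so the upper-range vanishing --- and with it the existence of the limit and the polynomial form of $\Gamma^{\xi,\eta}_{\pi,\mu}(s)$ --- is not established.
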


\subsection{Proof of Theorem \ref{thm-gamma}} \label{sec:Pf-th-gamma}
Let $(\pi,V)$ be as in Theorem \eqref{thm-gamma}. By Theorem \ref{thm-Man84}, we may suppose that $(\pi,V)=(\cInd^{\ol{S}}_{\ol{H_i}}(\si),W')$ for an irreducible strong cuspidal representation $(\si,W)$ of $\ol{H_i}$ with either $i=1$ or $i=2$. Let $l$ be the conductor of $(\si,W)$. First we consider the case $i=1$ and set $H=H_{i}$. \ds
In order to prove Theorem 4, we concretely construct $V$ and the Whittaker functionals.
\begin{lem} \label{Pf-th-gamma-L0}
There exists a basis $\cB$ of $W$ that has the following property: 
\begin{align*}
&\text{For all $\cb\in \cB$ there exists a unique additive character $\psi_\cb$ of $O$}\\
&\text{such that $\si(n(a))\cb=\psi_\cb(a)\cb$ $(a\in O)$}. 
\end{align*}
The characters $\psi_\cb$ for $\cb\in \cB$ are different from each other.
\end{lem}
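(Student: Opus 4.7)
The plan is to split the statement into two parts: first, simultaneous diagonalizability of the action of $\{[n(a),1]:a\in O\}$ on $W$, which is essentially formal; and second, the multiplicity-one property across these character eigenspaces, which is the real content of the lemma.

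I would begin by noting that $\chi(n(a))=1$ for all $a\in k$, so $\{n(a),n(b)\}=(1,1)=1$, and the map $a\mapsto [n(a),1]$ embeds $O$ as a compact abelian subgroup $\overline{N}_O:=\{[n(a),1]\mid a\in O\}$ of $\overline{H_1}$. Since $W$ is finite-dimensional and $\sigma$ is trivial on $H_{1,l}$, the restriction $\sigma|_{\overline{N}_O}$ factors through the finite abelian group $O/P^l$, yielding a decomposition
\begin{equation*}
W=\bigoplus_{\psi'}W_{\psi'},\qquad W_{\psi'}:=\{w\in W\mid \sigma(n(a))w=\psi'(a)w\ \text{for all}\ a\in O\},
\end{equation*}
where $\psi'$ ranges over characters of $O/P^l$. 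The strongly cuspidal assumption $\int_{P^{l-1}}\sigma(n(x))w\,dx=0$ forces $W_{\psi'}=0$ whenever $\psi'|_{P^{l-1}}$ is trivial, so only characters of exact level $l$ contribute. Any basis of $W$ respecting this decomposition is already an $\overline{N}_O$-eigenbasis.

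To upgrade this to the multiplicity-one statement, i.e.\ to show that each nonzero $W_{\psi'}$ is one-dimensional, I would invoke the explicit realization of $(\sigma,W)$ from \cite{Man84-I}: $\sigma=\cInd_{\overline{J}}^{\overline{H_1}}\tau$ for an irreducible representation $\tau$ of an open compact subgroup $\overline{J}\subset\overline{H_1}$ containing $\overline{N}_O$, with $\tau|_{\overline{N}_O}$ itself a character. A Mackey-style decomposition of $W|_{\overline{N}_O}$ indexed by the double cosets $\overline{J}\backslash \overline{H_1}/\overline{N}_O$, combined with the observation that distinct coset representatives conjugate the inducing character to distinct characters of $\overline{N}_O$, yields the required multiplicity freeness. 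Picking one eigenvector from each nonzero $W_{\psi'}$ then produces the basis $\cB$, with the characters $\psi_\cb$ pairwise distinct by construction.

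The principal obstacle is the multiplicity-one claim: diagonalizability and the level restriction follow from formal considerations and the strongly cuspidal hypothesis, but the distinctness of the characters hinges on a careful orbit analysis internal to Manderscheid's construction. Once this orbit analysis is carried out, the remainder of the argument is routine.
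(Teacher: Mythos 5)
Your reduction of the lemma to two steps is the right one, and the first step is fine: $a\mapsto n(a)$ does give a compact abelian subgroup of $\ol{H_1}$ on which $\si$ factors through $O/P^l$, so $W$ decomposes into character eigenspaces, and your remark that strong cuspidality forces every occurring character to be nontrivial on $P^{l-1}$ is also correct (though not needed for the statement). The genuine gap is that the second step --- the multiplicity-one assertion that each eigenspace $W_{\psi'}$ is one-dimensional, equivalently that the $\psi_\cb$ are pairwise distinct --- is never actually proved. You correctly flag it as ``the principal obstacle,'' but the Mackey argument you sketch is only a reduction: its key input, that distinct $(\ol{J},\overline{N}_O)$-double cosets conjugate the inducing character to pairwise distinct characters of $\overline{N}_O$, is precisely the content that needs to be established, and the structural claims you attribute to Manderscheid's construction (that $\si=\cInd_{\ol{J}}^{\ol{H_1}}\tau$ with $\ol{J}\supset\overline{N}_O$ and $\tau|_{\overline{N}_O}$ a character) are asserted without verification. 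As written, the proposal does not prove the lemma.

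For comparison, the paper takes a shorter route: it considers the single operator $\si(n(1))$, notes $\si(n(1))^p=1$ for some $p$ so its minimal polynomial divides $X^p-1$, concludes the eigenvalues are simple, and then reads off the eigencharacters of all $n(a)$, $a\in O$, since these commute with $n(1)$. (As literally written, that step too only yields diagonalizability; the simplicity of the eigenvalues is the same point both arguments must face.) A clean way to close the gap without entering Manderscheid's construction is to run the argument of Lemma \ref{Pf-th-gamma-L} in reverse: if $\cb\neq\cb'$ had $\psi_\cb=\psi_{\cb'}=\psi^\xi|_O$, then composing the $W$-valued map $\phi\mapsto\int_k\phi(n(x))\psi^\xi(-x)\,dx$ on $W'=\cInd_{\ol{H_1}}^{\ol{S}}(\si)$ with the coordinate functionals dual to $\cb$ and to $\cb'$ would produce two linearly independent $\psi^\xi$-Whittaker functionals (nonzero on $\phi^e_\cb$ and $\phi^e_{\cb'}$ respectively), contradicting the uniqueness of Whittaker functionals for $\ol{S}$ cited in the introduction. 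Supplying an argument of this kind is necessary before the proposal can count as a proof.
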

\begin{proof}
Since $\si$ is trivial on $H_{1,l}$, there exists $p\in \Z_{>0}$ such that $\si(n(1))^p$ is trivial. Then, the minimal polynomial of $\si(n(1))$ divides $X^p-1$, which implies that the eigenvalues of $\si(n(1))$ have no multiplicities. Thus, we can take a basis $\cB$ of $W$ satisfying
$$\si(n(1))\cb=c_\cb\cb$$
where the eigenvalues $c_{\cb}$ $(\cb\in \cB)$ are different from each other. Due to this, noting that $n(a)$ for $a\in O$ is commutative with $n(1)$, we conclude that the basis $\cB$ satisfies the condition of this lemma.
\end{proof}
From the property of $\cB$, for $\cb\in \cB$ the linear endomorphism $v\mapsto\int_O\psi_\cb(-x)\si(n(x))vdx$ of $W$ is a projection onto the one-dimensional subspace $\langle \cb\rangle$ generated by $\cb$. Denote the image of $v\in W$ under this projection by $|v|_\cb$.\ds
For $g\in\ol{S}$ and $v\in W$, let us define a vector $\phi^g_v\in W'$ by 
$$\phi^g_v(h):=\begin{cases}\si(h')v&h=h'g,\sp h'\in\ol{H}\\ 0&h\notin \ol{H}g \end{cases}\quad(h\in\ol{S}).$$
Let $K$ be a complete system of representatives for $k/O$ such that $0\in K$. Then, the set
$$\ol{S}':=\{n(t)\dm{\vpi^n}\mid t\in K,\sp n\in\Z\}$$
is a complete system of representatives for  $\ol{H}\bsl\ol{S}$. From this, we obtain a convenent basis of $W'$, 
\begin{align}
\{\phi^g_{\cb}\mid g\in\ol{S}',\sp \cb\in \cB\},
\label{Pf-th-gamma-f1}
\end{align}
to work with. By construction, we get the equation
\begin{align}
\pi(n(\vpi^{-2n}a))\phi^{n(t)\dm{\vpi^n}}_{\cb}=\psi_\cb(a)\phi^{n(t)\dm{\vpi^n}}_{\cb}\quad(a\in O,\sp n(t)\dm{\vpi^n}\in\ol{S}',\sp \cb\in \cB).
\label{Pf-th-gamma-f0}
\end{align}
Let 
\begin{align*}
\text{$X(\pi)$ be the set of $\xi\in k^\times$ such that $\psi^{\xi}|O=\psi_{\cb}$ for some $\cb\in \cB$.}
\end{align*}
The set $k(\pi)$ defined in \eqref{kPi} is explicitly determined in terms of $X(\pi)$. 

\begin{lem} \label{Pf-th-gamma-L} We have $k(\pi)=X(\pi)(k^\times)^2$. 
%$$k(\pi)=\{a^2\xi\in k\t\mid a\in k\t,\exists \cb\in \cB\text{ s.t. }\psi^\xi|_O=\psi_\cb\}.$$
Moerover, for $\xi\in X(\pi)$, we have a unique $\psi^\xi$-Whittaker functional  $l^\xi:W'\to \C$ such that
$$l^\xi(\phi)= \biggl(\int_k\phi(n(x))\psi^\xi(-x)dx\biggr)\, \cb, \quad \phi\in W'.$$
%Here, since $l^\xi(V)=\langle \cb\rangle$, we identify $l^\xi(V)$ with $\C$.
\end{lem}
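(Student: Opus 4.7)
My plan is to analyze $\psi^\xi$-Whittaker functionals on the compactly induced model $W'=\cInd_{\ol{H}}^{\ol{S}}(\sigma)$ via the basis \eqref{Pf-th-gamma-f1}. Every $\phi\in W'$ has a finite expansion $\phi=\sum_{g\in\ol{S}'}\phi^g_{v_g}$, so a linear functional $l$ on $W'$ amounts to a family $\{\lambda_g:W\to\C\}_{g\in\ol{S}'}$ through $l(\phi^g_v)=\lambda_g(v)$. I will prove the two inclusions $X(\pi)(k^\times)^2\subseteq k(\pi)$ and $k(\pi)\subseteq X(\pi)(k^\times)^2$ separately; the explicit formula in the statement emerges from the construction in the first step, and uniqueness up to scalar is the multiplicity-one theorem cited after \eqref{kPi}.

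\textit{Forward inclusion.} Fix $\xi\in X(\pi)$ and let $\cb\in\cB$ be the unique basis vector with $\psi_\cb=\psi^\xi|_O$. A direct coset computation shows that $\phi^g_v(n(x))$ vanishes unless $g=n(t)$ for some $t\in K$: indeed $n(x)g^{-1}=n(x-t\vpi^{-2n})\dm{\vpi^{-n}}$ in $S$, and (since the central $\pm 1$ lies in $\ol{H}$) this is in $\ol{H}$ only if $n=0$ and $x\in t+O$, in which case $\phi^{n(t)}_v(n(x))=\sigma(n(x-t))v$. Consequently
$$\int_k\phi^{n(t)}_v(n(x))\psi^\xi(-x)\,dx=\psi^\xi(-t)\int_O\sigma(n(y))v\cdot\psi^\xi(-y)\,dy=\psi^\xi(-t)\,|v|_\cb,$$
where the last equality uses $\psi^\xi|_O=\psi_\cb$ and the definition of $|v|_\cb$. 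Summing over the finitely many $g$ with $v_g\ne 0$, the integral $\int_k\phi(n(x))\psi^\xi(-x)\,dx$ is a well-defined scalar multiple of $\cb$, and $l^\xi(\phi)$ is defined to be that scalar. The translation $x\mapsto x+a$ immediately yields $l^\xi(\pi(n(a))\phi)=\psi^\xi(a)l^\xi(\phi)$, and $l^\xi(\phi^e_\cb)=1$ shows $l^\xi\ne 0$. Hence $\xi\in k(\pi)$, and since $k(\pi)$ is a union of $(k^\times)^2$-cosets (cf.\ \eqref{Bessel-f0}), this gives $X(\pi)(k^\times)^2\subseteq k(\pi)$.

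\textit{Reverse inclusion and main obstacle.} Let $\xi\in k(\pi)$ and $l$ a non-trivial $\psi^\xi$-Whittaker functional; pick $g=n(t)\dm{\vpi^n}\in\ol{S}'$ with $\lambda_g\ne 0$. Using $\dm{\vpi^n}n(a)\dm{\vpi^{-n}}=n(a\vpi^{2n})$ one sees $gn(a)=n(a\vpi^{2n})g$ in $\ol{S}$; when $a\in P^{-2n}$ the element $n(a\vpi^{2n})$ lies in $\ol{H}$, so
$$\pi(n(a))\phi^g_v=\phi^g_{\sigma(n(a\vpi^{2n}))v}.$$
Applying the Whittaker condition $l(\pi(n(a))\phi^g_v)=\psi(\xi a)\lambda_g(v)$ and substituting $b=a\vpi^{2n}\in O$ produces
$$\lambda_g(\sigma(n(b))v)=\psi^{\xi\vpi^{-2n}}(b)\,\lambda_g(v),\qquad b\in O,\; v\in W.$$
Taking $v=\cb'\in\cB$ with $\lambda_g(\cb')\ne 0$ forces $\psi_{\cb'}=\psi^{\xi\vpi^{-2n}}|_O$, so $\xi\vpi^{-2n}\in X(\pi)$ and $\xi\in\vpi^{2n}X(\pi)\subseteq X(\pi)(k^\times)^2$. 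The main technical obstacle is the coset bookkeeping for $\ol{H}\backslash\ol{S}$: the 2-cocycle of $\ol{S}$ might a priori contribute signs, but all the relevant products involve only elements of $\ol{N}\cup\ol{A}$, and $\ol{H}$ contains the entire kernel $\{[1,\pm 1]\}$ of the covering, so sign ambiguities are always absorbed.
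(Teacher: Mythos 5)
Your proof is correct and takes essentially the same approach as the paper: the reverse inclusion via the equivariance \eqref{Pf-th-gamma-f0} of the basis vectors $\phi^{n(t)\dm{\vpi^n}}_{\cb}$ under $\pi(n(\vpi^{-2n}a))$, and the forward inclusion by evaluating $\lambda^\xi(\phi)=\int_k\phi(n(x))\psi^\xi(-x)\,dx$ on the basis \eqref{Pf-th-gamma-f1} to see it lands in $\langle\cb\rangle$ and is nonzero. The only cosmetic difference is that you re-derive the support computation and the equivariance rather than quoting them.
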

\begin{proof}  Let $\xi\in k(\pi)$ and $l^\xi$ a nontrivial $\psi^\xi$-Whittaker functional on $V=W'$. Since \eqref{Pf-th-gamma-f1} is a basis of $W'$, we can find $n(t)\dm{\vpi^n}\in \ol{S}'$ and $\cb\in \cB$ such that $\phi:=\phi^{n(t)\dm{\vpi^n}}_{\cb}$ satisfies
$$ l^\xi(\phi)\not=0.
$$
Then, by \eqref{Pf-th-gamma-f0},
$$\psi^\xi(\vpi^{-2n}a)l^\xi(\phi)=l^\xi(\pi(n(\vpi^{-2n}a))\phi)=\psi_\cb(a)l^\xi(\phi)\quad(a\in O).$$
Since $l^\xi(\phi)\not=0$, we obtain $\psi^\xi(\vpi^{-2m}a)=\psi^{\vpi^{-2n}\xi}(a)=\psi_\cb(a)\quad(a\in O)$, which means $\xi \in X(\pi)(k^\times)^2$. Thus $k(\pi)\subset X(\pi)(k^\times)^2$. To show the converse inclusion, let $\xi\in X(\pi)$ and define a linear map $\lambda^\xi:W'\rightarrow W$ as 
$$\lambda^\xi: \phi\mapsto\int_k\phi(n(x))\psi^\xi(-x)dx.$$
Then, the values of $\lambda^\xi$ on the basis in \eqref{Pf-th-gamma-f1} is computed as
$$\lambda ^\xi(\phi^{n(t)\dm{\vpi^n}}_{\cb'})=\begin{cases}\psi^\xi(-t)\cb&n=0\text{ and } \cb=\cb'\\0&n\neq0\text{ or }\cb\neq \cb'\\ \end{cases}\quad(n(t)\dm{\vpi^n}\in\ol{S}',\sp \cb'\in \cB).$$
This shows that $\lambda^\xi(V)=\langle \cb\rangle$ and that $\lambda^\xi\not=0$, which allow us to define a non-zero linear form $l^\xi$ on $W'$ by the relation $\lambda^\xi(v)=l^\xi(v)\,\cb$ $(v\in W')$. Since we see
$$l^\xi(\pi(n(a))\phi)=\int_k\phi(n(x+a))\psi^\xi(-x)dx=\psi^\xi(a)\int_k\phi(n(x))\psi^\xi(-x)dx\quad(a\in k,\sp \phi\in V),$$
$l^\xi$ is indeed a $\psi^\xi$-Whittaker functional on $\pi$.
\end{proof}

Now we start the proof of Theorem \ref{thm-gamma}.

By \eqref{Bessel-f1} and Lemma \ref{Pf-th-gamma-L}, we only need to consider $J^{\xi,\eta}_\pi$ for $\xi,\eta \in X(\pi)$. Now, let $\xi,\eta\in X(\pi)$ and choose $\cb,\cb'\in \cB$ such that $\psi^\xi|_O=\psi_\cb,\sp\psi^\eta|_O=\psi_{\cb'}$. Let $\mu$ be a character of $k\t$ with the conductor exponent $m \in \Z_{\geq 0}$. Set $m':=\max(l,m)$, where $l\,(\geq 1)$ is the conductor of $\si$ as above. Fix $s\in \C$, and set$$
\Ga_n(s):=\int_{\vpi^nO\t}J^{\xi,\eta}_\pi(\dm{x}w)\chi_\psi(x)\mu(x)|x|^{s-\ha}d\t x.
$$
It suffices to show that there exists $M\in \Z_{>0}$ such that
\begin{align}
\Ga_n(s)=0\quad(n<-M\text{ or }0<n).
 \label{Pf-th-gamma-f2}
\end{align}
Indeed, if this is the case, since $X(\pi)$ is finite we can choose $M>0$ large enoug so that the equality in \eqref{thm-gamma-F} holds for all $\xi,\eta\in X(\pi)$. 

Set $\phi:=\phi^e_{\cb'}$. From the proof of Lemma \ref{Pf-th-gamma-L}, $W^\eta_\phi(e)=l^\eta(\phi)=1$; by this, we have 
\begin{align*}
J^{\xi,\eta}_\pi(g)&=J^{\xi,\eta}_\pi(g)\, W^\eta_\phi(e)=\int^+_kW^\xi_\phi(gn(y))\psi^\eta(-y)dy\\
                        &=\int^+_kl^\xi(\pi(gn(y))\phi)\psi^\eta(-y)dy=\int^+_k\int_k\phi(n(z)gn(y))\psi(-\xi z-\eta y)dzdy.
\end{align*}
Hence, for $n\in\Z$
\begin{align*}
\Ga_n(s)&=q^{n(\ha-s)}\int_{\vpi^nO\t}\int^+_k\int_k\phi(n(z)\dm{x}wn(y))\psi(-\xi z-\eta y)\chi_\psi(x)\mu(x)dzdyd\t x.
\end{align*}
The support condition of $\phi$ yields 
$$n(z)\dm{x}wn(y)\in\ol{H}\lr\begin{cases}x\in\vpi^nO\t,\sp n<0,\sp y\in\vpi^nO\t,\sp z\equiv x^2y\iv\mod O\\x\in O\t,\sp y,z\in O\\ \end{cases}.$$
This already shows that 
\begin{align}
\text{$J_{\pi,\mu}^{\xi,\eta}(\ul{x}w)=0$ for $x\in P$}
\label{J-hyouka1}
\end{align}
 as well as $\Gamma_{n}(s)=0$ for $n>0$. In the reamining part, we assume $n\leq0$. For $n\leq -l$ and $x\not\in P^{n}$, we obtain 
\begin{align*}
J_{\pi}^{\xi,\eta}(\ul{x} w)&=\int_{\varpi^n O^\times}\int_{x^2y^{-1}+O}
 \si(n(z)\ul{x} w n(y))\cb' \psi^\xi(-z-\xi^{-1}\eta y)\,dzdy
\\
&=\int_{\varpi^nO} \int_{O} \si\left(n(z)\ul{xy^{-1}}\,\left(\begin{smallmatrix} 1 & 0 \\ y^{-1} & 1 \end{smallmatrix} \right) \right)\,\cb'\,(x^{-1}y,y^{-1})\,\psi^{\xi}(-z-x^2y^{-1}-\xi^{-1}\eta y)\,dzdy
\end{align*}
Due to $n\leq -l$, 
$$
\int_{O} \si\left(n(z)\ul{xy^{-1}}\,\left(\begin{smallmatrix} 1 & 0 \\ y^{-1} & 1 \end{smallmatrix} \right) \right)\cb'\psi^{\xi}(-z)=\int_{O}\si(n(z))\,(\si(\ul{xy^{-1}})\cb')\,\psi^\xi(-z)=|\si(\ul{xy^{-1}})\cb')|_{\cb}.
$$
Thus, we obtain
\begin{align}
J_{\pi}^{\xi,\eta}(\ul{x}w)=\int_{\varpi^n O}|\si(\ul{xy^{-1}}\cb'|_\cb\,(x^{-1}y,y^{-1})\,\psi^\xi(-x^2y^{-1}-\xi^{-1}\eta y)\,dy, \quad (n\leq -l,x\not\in P^{n}). 
\label{J-sekibunkoushiki}
\end{align} 
Substituting this to the last formula of $\Ga_n(s)$, we have   
\begin{align*}
\Ga_n(s)&=q^{n(\ha-s)}\int_{\vpi^nO\t}\int_{\vpi^nO\t}|\si(\dm{xy\iv})\cb'|_\cb(x\iv y,y\iv)\psi^\xi(-x^2y\iv-\xi\iv\eta y)\chi_\psi(x)\mu(x)dyd\t x\\
&=q^{n(\frac{3}{2}-s)}\int_{O\t}\int_{O\t}|\si(\dm{xy\iv})\cb'|_\cb(x\iv y,\vpi^{-n}y\iv)\\
&\hspace{105pt}\psi^\xi(-\vpi^n(x^2y\iv+\xi\iv\eta y))\chi_\psi(\vpi^nx)\mu(\vpi^nx)dyd\t x\\
&=q^{n(\frac{3}{2}-s)}\int_{O\t}\int_{O\t}|\si(\dm{x})\cb'|_\cb(x\iv,\vpi^{-n}y\iv)\psi^\xi(-\vpi^ny(x^2+\xi\iv\eta))\chi_\psi(\vpi^nxy)\mu(\vpi^nxy)dyd\t x.
\end{align*}
From the definition of $m'$, $a\to\si(\dm{a})$ and $\mu$ are constant on $1+P^{m'}$. Due to \eqref{Odd}, the Hilbert symbol $(\cdot,\cdot)$ and $\chi_\psi$ are constant on $1+P$. Hence, 
\begin{align*}
\Ga_n(s)=q^{n(\frac{3}{2}-s)}\sum_{t,u\in O\t/(1+P^{m'})}c_{t,u}\int_{t(1+P^{m'})}\int_{u(1+P^{m'})}\psi^\xi(-\vpi^ny(x^2+\xi\iv\eta))dyd\t x\\
\text{where we set $c_{t,u}:=|\si(\dm{t})\cb'|_\cb(t\iv,\vpi^{-n}u\iv)\chi_\psi(\vpi^ntu)\mu(\vpi^ntu)$ for $t,u\in O\t/(1+P^{m'})$}.
\end{align*}
Now we examine the integral
\begin{align}
\int_{t(1+P^{m'})}\int_{u(1+P^{m'})}\psi^\xi(-\vpi^ny(x^2+\xi\iv\eta))dyd\t x, \qquad t,u\in O\t/(1+P^{m'}.
 \label{doubleInt}
\end{align}
Suppose $n\leq l-2m'$. Set
$$U_t:=\{a\in t(1+P^{m'})\mid a^2+\xi\iv\eta\in P^{l-n-m'}\}.$$
Then,
\begin{align*}
&\hspace{15pt}\int_{t(1+P^{m'})}\int_{u(1+P^{m'})}\psi^\xi(-\vpi^ny(x^2+\xi\iv\eta))dyd\t x\\
&=\int_{t(1+P^{m'})}\int_{P^{m'}}\psi^\xi(-\vpi^nu(1+y)(x^2+\xi\iv\eta))dydx\\
&=\int_{t(1+P^{m'})}\psi^\xi(-\vpi^nu(x^2+\xi\iv\eta))\left\{\int_{P^{m'}}\psi^\xi(-\vpi^nuy(x^2+\xi\iv\eta))dy\right\}dx.
\end{align*}
From the definition of $U_t$, for $x\in t(1+P^{m'})$, we have
$$\int_{P^{m'}}\psi^\xi(-\vpi^nuy(x^2+\xi\iv\eta))dy=\begin{cases}q^{m'}&x\in U_t\\ 0&x\notin U_t\\ \end{cases}.$$
Hence the integral \eqref{doubleInt} equals
\begin{align*}
&q^{-m'}\int_{U}\psi^\xi(-\vpi^nu(x^2+\xi\iv\eta))dx\\
&=q^{-m'}\sum_{c\in U/(1+P^{l-n-m'})}\int_{c(1+P^{l-n-m'})}\psi^\xi(-\vpi^nu(x^2+\xi\iv\eta))dx\\
&=q^{-m'}\sum_{c\in U/(1+P^{l-n-m'})}\psi^\xi(-\vpi^nu(c^2+\xi\iv\eta))\int_{P^{l-n-m'}}\psi^\xi(-\vpi^nc^2u(2x+x^2))dx.
\end{align*}
Since $2l-n-2m'\geq l$ and $l-m'<l$, we obtain
$$\int_{P^{l-n-m'}}\psi^\xi(-\vpi^nc^2u(2x+x^2))dx=\int_{P^{l-n-m'}}\psi^\xi(-2\vpi^nc^2ux)dx=0.$$
By the consideration so far, we have $\Ga_n(s)$ when $l-2m'\geq n$ completing the proof of \eqref{Pf-th-gamma-f2}. 

The case when $H=H_2$ is similar.

\subsection{An example}
We give an example in which the gamma factor dose not vanish. Set $k:=\Q_3$, $O:=\Z_{3}$, $P=3\Z_3$. Put $[a]:=\sum^{-1}_{i=n}a_i3^i$ for $a=\sum^{\inf}_{i=n}a_i3^i\in k$ and define an additive character $\psi$ of $k$ by  
$$\psi(a):=e^{2\pi i[a]}\quad(a\in k).$$
Let $\si$ be a one-dimensional representation of $H_1={\rm SL}_2(\Z_3)$ defined by$$\si(w)=1,\sp\si(n(a))=\exp(2\pi i[3\iv a])\quad(a\in O),\sp.$$
Then, $\si$ is a strongly cuspidal representation of conductor $1$. 
Let $\pi:=\cInd^{\ol{G}}_{\ol{H}_1}(\si)$. Since $X(\pi)=3\iv(1+P)$, it suffices to consider the case $\xi=3\iv$. For $\mu$ be the character of $k\t$ of the conductor $O$, by using \eqref{J-sekibunkoushiki}, the gamma factor of $\pi$ can be calculated as follows.
\begin{align*}
\Ga^{\xi,\xi}_{\pi,\mu}(s)&=2\int_{\Z_3\t}J^{\xi,\xi}_\pi(\dm{x}w)\chi_\psi(x)\mu(x)d\t x\\
&=2\int_{\Z_3\t}\int_{\Z_3}\int_{\Z_3}\si(n(z)\dm{x}wn(y))\psi(-\xi z-\xi y)\chi_\psi(x)\mu(x)dzdyd\t x=\tfrac{4}{3}.
\end{align*}
Note that this is a non-zero constant.

\section{Local zeta function and functional equation} \label{sec:FE}
Let $(\pi,V)$ be an irreducible genuine supercuspidal representation of $\ol{S}$. Let $\mu$ be a character of $k^\times$. For $\xi\in k(\pi)$ and $\sp v\in V$,  we define  
\begin{align}
Z(s,\mu,l^\xi,v):=2\int_{k\t}W^\xi_v(\dm{x})\chi_\psi(x)\mu(x)|x|^{s-\ha}d\t x 
\label{Def-LZF}
\end{align}
for $s\in \C$, and call this the zeta function, where $\{l^\xi\}_{\xi\in k(\pi)}$ is a family of non-trivial $\psi^{\xi}$-Whittaker functionals on $V$. 

\begin{thm} \label{thm-ZetaInt}
The local zeta function $Z(s,\mu,l^\xi,v)$ converges for any $s\in \C$ and $Z(s,\mu,l^\xi,v)$ is expressed as a polynomial in $q^s,\sp q^{-s}$
\end{thm}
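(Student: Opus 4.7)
The plan is to show that $x\mapsto W^\xi_v(\underline x)$ has compact support in $k^\times$. Once this is done, decomposing $k^\times=\bigsqcup_{n\in\Z}\varpi^n O^\times$ leaves only finitely many contributing slices, and each contributes $c_n q^{-n(s-1/2)}$ for a constant $c_n\in\C$; the resulting finite Laurent sum in $q^{-s}$ is automatically a polynomial in $q^s,q^{-s}$ and makes sense for every $s\in\C$, giving both convergence and the asserted polynomial form.

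Both support bounds rest on a single commutation identity in $\overline S$. A direct computation of the cocycle gives $\{\underline x,n(a)\}=1$, so $\underline x\,n(a)=n(x^2 a)\,\underline x$ when $g\in S$ is lifted as $[g,1]$; combined with the Whittaker transformation law this yields
\begin{equation*}
W^\xi_v(\underline x\, n(a))=\psi(\xi x^2 a)\,W^\xi_v(\underline x),\qquad a\in k.
\end{equation*}
For $|x|$ large, the smoothness of $v$ supplies $N\in\Z$ with $\pi(n(b))v=v$ for all $b\in P^N$, so $(\psi(\xi x^2 b)-1)W^\xi_v(\underline x)=0$ on $P^N$; taking $b=\varpi^N$, one has $\psi(\xi x^2 b)\neq 1$ as soon as $v_k(\xi)+2v_k(x)+N<0$, forcing $W^\xi_v(\underline x)=0$ for $|x|$ above an explicit threshold.

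For $|x|$ small, supercuspidality supplies $n_0\in\Z$ with $\int_{P^{n_0}}\pi(n(a))v\,da=0$; applying $l^\xi\circ\pi(\underline x)$ to this vanishing vector and using the transformation law collapses the integral to
\begin{equation*}
W^\xi_v(\underline x)\,\int_{P^{n_0}}\psi(\xi x^2 a)\,da=0.
\end{equation*}
The inner integral equals $\operatorname{vol}(P^{n_0})\neq 0$ exactly when $a\mapsto\psi(\xi x^2 a)$ is trivial on $P^{n_0}$, that is, when $v_k(\xi)+2v_k(x)+n_0\geq 0$, which holds for $|x|$ sufficiently small; hence $W^\xi_v(\underline x)=0$ there. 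Combining the two ranges, the support of $x\mapsto W^\xi_v(\underline x)$ is a compact subset of $k^\times$.

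Finally, on each annulus $\varpi^n O^\times$ the factor $\chi_\psi(x)\mu(x)$ is locally constant --- for $\chi_\psi$ this follows from \eqref{GenChar-f0} and local constancy of the Hilbert symbol on $k^\times\times k^\times$, while $\mu$ is a character and $W^\xi_v$ is smooth --- so $c_n:=\int_{\varpi^n O^\times}W^\xi_v(\underline x)\chi_\psi(x)\mu(x)\,d^\times x$ is a well-defined complex number, vanishing outside finitely many $n$. Using $|x|^{s-1/2}=q^{-n(s-1/2)}$ on $\varpi^n O^\times$, we obtain $Z(s,\mu,l^\xi,v)=2\sum_{n}c_n q^{-n(s-1/2)}$, a polynomial in $q^s$ and $q^{-s}$. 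The main technical point is the small-$|x|$ vanishing, since this is what genuinely uses the supercuspidality hypothesis to constrain the phase $\psi(\xi x^2 a)$; the large-$|x|$ vanishing follows from smoothness alone, and the passage from compact support to polynomiality is routine bookkeeping.
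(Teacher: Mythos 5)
Your argument is correct and takes essentially the same route as the paper: prove that $x\mapsto W^\xi_v(\underline{x})$ has compact support in $k^\times$, using smoothness of $v$ for large $|x|$ and supercuspidality for small $|x|$, then sum over the finitely many contributing annuli $\varpi^n O^\times$. One minor imprecision: since $q$ need not be prime, the specific choice $b=\varpi^N$ does not guarantee $\psi(\xi x^2 b)\neq 1$ merely because $v_k(\xi)+2v_k(x)+N<0$; but your identity holds for all $b\in P^N$, and under that valuation condition $\xi x^2 P^N\supseteq P^{-1}$, so some $b\in P^N$ has $\psi(\xi x^2 b)\neq 1$ (the paper sidesteps this by fixing $t_0$ with $\psi^\xi(t_0)\neq 1$ and taking $t=a^{-2}t_0$), which completes the large-$|x|$ vanishing.
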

\begin{proof}
For $\xi\in k(\pi)$ and $v\in V$, we define $K^\xi_v:k\t\to\C$ by
$$K^\xi_v(a):=W^\xi_v(\dm{a})\quad(a\in k\t).$$
It suffices to show that $K^\xi_v$ is compactly supported on $k\t$. Since $v\in V$ is a smooth vector, there exists $N\in \Z_{>0}$ such that $\pi(n(t))v=v\,(t\in P^{N})$. Then, for $t\in P^N$, 
$$K^\xi_v(a)=K^\xi_{\pi(n(t))v}(a)=W^\xi_v(\dm{a}n(t))=W^\xi_v(n(a^2t)\dm{a})=\psi^\xi(a^2t)K^\xi_v(a)\quad(a\in k\t).$$
Let $t_0\in k$ be such that $\psi^\xi(t_0)\not=1$. Suppose $a^2 \not\in t_0 P^{-N+1}$; then $t=a^{-2}t_0\in P^{N}$ and hence $K^\xi_v(a)=\psi^\xi(a^2t)K^\xi_v(a)=\psi^{\xi}(t_0)K^\xi_v(a)$. Since $\psi^{\xi}(t_0)\not=1$, we have $K^\xi_v(a)=0$. Thus, $K_v^\xi(a)$ is identically zero when $|a|$ is large. \ds
Next, let us show that $K_v^\xi(a)$ is zero when $|a|$ is small. Since $\pi$ is supercuspidal, there exists $n\in \Z$ such that 
$$\int_{P^n}\pi(n(x))vdx=0.
$$
When $|a|$ is small enough so that $\psi^{\xi}(a^2x)=1$ for all $x\in P^{n}$, we have
$$0=l^\xi\left(\pi(\dm{a})\int_{P^n}\pi(n(x))vdx\right)=\int_{P^n}W^\xi_v(\dm{a}n(x))dx=K^\xi_v(a)\int_{P^n}\psi^\xi(a^2x)dx=\vol(P^{n})K^{\xi}_v(a),
$$
which implies $K^\xi_v(a)=0$. 
\end{proof}

For $a\in k\t$, define $\phi_a:k\to\C$ as
$$\phi_a(x):=W^\xi_v(\dm{a}wn(x))\quad(x\in k).$$
\begin{lem} \label{thm-FE-L1}
We have $\phi_{a}\in {\mathcal S}(k)$. 
\end{lem}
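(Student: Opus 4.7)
The plan is to establish the two defining properties of $\mathcal{S}(k)$: local constancy of $\phi_a$ and compact support. Local constancy is the easy half. Since $v$ is a smooth vector, there exists $N\in\Z_{>0}$ such that $\pi(n(t))v=v$ for every $t\in P^{N}$. Because $wn(x)n(t)=wn(x+t)$ in $S$ and the covering cocycle of two upper triangular unipotent elements is trivial, we obtain $\phi_a(x+t)=W^\xi_v(\dm{a}wn(x)\,n(t))=W^\xi_v(\dm{a}wn(x))=\phi_a(x)$ for all $x\in k$ and $t\in P^N$. Hence $\phi_a$ is locally constant on $k$.

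For compact support the plan is to use the Bruhat decomposition for $x\ne 0$:
$$
wn(x)=n(-x^{-1})\,\dm{x^{-1}}\,\bar n(x^{-1}),\qquad \bar n(z):=\left(\begin{smallmatrix}1&0\\z&1\end{smallmatrix}\right),
$$
which, after lifting to $\ol{S}$, introduces only a sign $\e(x)\in\{\pm 1\}$ coming from the cocycle. Multiplying by $\dm{a}$ and using $\dm{a}n(-x^{-1})\dm{a}^{-1}=n(-a^2 x^{-1})$, one gets
$$
\dm{a}wn(x)=\e(x)\,n(-a^2 x^{-1})\,\dm{ax^{-1}}\,\bar n(x^{-1})
$$
in $\ol{S}$. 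Applying the $\psi^\xi$-Whittaker property on the left $N$-factor then yields
$$
\phi_a(x)=\e(x)\,\psi^\xi(-a^2 x^{-1})\,W^\xi_v\bigl(\dm{ax^{-1}}\,\bar n(x^{-1})\bigr).
$$
For $|x|$ sufficiently large, $\bar n(x^{-1})$ lies in the compact open subgroup of $\ol{S}$ stabilizing $v$, so the right-hand side collapses to $\e(x)\,\psi^\xi(-a^2x^{-1})\,K^\xi_v(ax^{-1})$, where $K^\xi_v$ is the function introduced in the proof of Theorem~\ref{thm-ZetaInt}. But $K^\xi_v$ was shown there to be compactly supported on $k^\times$; in particular it vanishes when its argument has sufficiently small absolute value. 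Choosing $|x|$ large enough forces $|ax^{-1}|$ to be that small, so $\phi_a(x)=0$ for $|x|$ large. Combined with local constancy, this gives $\phi_a\in\mathcal{S}(k)$.

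The only delicate point is the bookkeeping of the cocycle sign $\e(x)$ and the verification that the relevant equalities in $\ol{S}$ (not just $S$) hold on the nose, which I expect to handle by a direct computation from the formula $\{g,h\}=\bigl(\chi(gh)/\chi(g),\chi(gh)/\chi(h)\bigr)$ recalled in Section~\ref{sec:Cover}; since the sign only multiplies $\phi_a$ and does not affect whether it is zero, this is not a real obstacle. The conceptual content is that the $\bar N$-part of $\dm{a}wn(x)$ tends to the identity as $|x|\to\infty$, so smoothness of $v$ reduces the problem to the already-established decay of the restriction to the diagonal torus.
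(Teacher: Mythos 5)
Your proof is correct and follows essentially the same route as the paper: both use the decomposition $wn(x)=n(-x^{-1})\,\dm{x^{-1}}\,\bar n(x^{-1})$ for $|x|$ large and then invoke the compact support of $u\mapsto W^\xi_{v}(\dm{u})$ established in the proof of Theorem~\ref{thm-ZetaInt}. The only (harmless) difference is that you absorb $\bar n(x^{-1})$ by noting it fixes $v$ once $|x|$ is large, whereas the paper expands $W^\xi_{\pi(k_x)v}$ over a finite spanning set of $\SL_2(O)$-translates valid for all $|x|>1$.
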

\begin{proof} Evidently, $\phi_a$ is locally constant on $k$. Hence it suffices to show that its support is bounded. Since $v\in V$ is a smooth vector, there exist finite number of smooth functions $c_i:{\rm SL}_2(O) \rightarrow \C$ $(1\leq i\leq m)$ and vectors $v_i\in V$ $(1\leq i \leq m)$ such that 
$$
W_{\pi(k)v}=\sum_{i=1}^{m}c_i(k)\,W_{v_i}, \quad k \in {\SL}_2(O).
$$
Suppose $|x|>1$. Then, 
$$
w n(x)=\left(\begin{smallmatrix}1 & -x^{-1} \\ 0 & 1 \end{smallmatrix} \right)\left(\begin{smallmatrix}x^{-1} & 0 \\ 0 & x \end{smallmatrix} \right)k_x \quad (k_x:=\left(\begin{smallmatrix}1 & 0 \\ x^{-1} & 1 \end{smallmatrix} \right) \in \SL_2(O))
$$
is the Iwasawa decomposition of $wn(x)$. Hence, 
\begin{align*}
\phi_a(x)=W^{\xi}_{v}=\psi^\xi(-a^2x)\,W_{\pi(k_x)v}^\xi(\ul{ax^{-1}})
=\psi^{\xi}(-a^2x)\sum_{i=1}^{m}c_{i}(k_x)\,W_{v_i}^{\xi}(\ul{ax^{-1}}). 
\end{align*} 
As shown in the proof of Theorem \ref{thm-ZetaInt}, the function $u\mapsto W_{v_i}^{\xi}(\ul{u})$ is of compact support on $k^\times$; hence, there exists $M>0$ such that $W_{v_i}^{\xi}(\ul{ax^{-1}})=0$ ($1\leq i \leq m)$ unless $|ax^{-1}|>M$. Therefore, the support of $\phi_a$ is contained in $|x|<|a|M^{-1}$.
\end{proof}

Recall the Bessel function $J_{\pi,\mu}^{\xi,\eta}(g)$ (see \S\ref{sec:Bessel}).\begin{prop} \label{Bessel-asymp}
We have
\begin{align*}
J_{\pi}^{\xi,\eta}(\ul{x}w)=0 \quad x\in O-\{0\}.
\end{align*}
There exists a constant $C>0$ such that  
$$
 |J_{\pi}^{\xi,\eta}(\ul{x}w)|\leq C\,\max(1,|x|), \quad x\in k^\times.
$$
If $\Re(s)<-1/2$, then 
$$
 \int_{k^\times}|J_{\pi,\mu}^{\xi,\eta}(\ul{x}w)|\,|x|^{\Re(s)-1/2}\d^\times x<\infty
$$
and 
$$
\Gamma_{\pi,\mu}^{\xi,\eta}(s)=\int_{k^\times} J_{\pi}^{\xi,\eta}(\ul{x}w)\chi_{\psi}(x)\mu(x)|x|^{s-1/2}\,d^\times x \quad (\Re(s)<-1/2). 
$$
\end{prop}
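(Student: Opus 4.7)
The plan is to work in the compactly induced model $\pi=\cInd_{\ol{H}_i}^{\ol{S}}(\sigma)$ from Theorem~\ref{thm-Man84} with the test vector $\phi=\phi^{e}_{\cb'}$, where $\cb'\in\cB$ is the basis element with $\psi_{\cb'}=\psi^\eta|_O$. Then $W^\eta_\phi(e)=l^\eta(\phi)=1$ by Lemma~\ref{Pf-th-gamma-L}, and $J_\pi^{\xi,\eta}(\ul{x}w)$ is given by the same double integral against $\phi$ already analyzed in the proof of Theorem~\ref{thm-gamma}. All three assertions will follow from this realization together with the support and boundedness estimates developed there.

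For the vanishing claim on $O-\{0\}$, the support analysis underlying \eqref{J-hyouka1} forces $J_\pi^{\xi,\eta}(\ul{x}w)=0$ for $x\in P$, since no $(z,y)$ makes $n(z)\ul{x}wn(y)\in\ol{H}$. On the remaining stratum $x\in O^\times$ the double integral is taken over $(z,y)\in O\times O$, and after the $\psi^\xi$-integration in $z$ and the $\psi^\eta$-integration in $y$ it collapses to the matrix coefficient $[\sigma(\ul{x}w)\cb']_{\cb}$, where $\cb\in\cB$ is the basis element with $\psi_\cb=\psi^\xi|_O$. I would show this vanishes by combining the strong cuspidality of $\sigma$ with how $\sigma(\ul{a})$ permutes the $N(O)$-eigen basis $\cB$ of Lemma~\ref{Pf-th-gamma-L0}; this step, which is not a consequence of support considerations alone, is the main obstacle.

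For the bound $|J_\pi^{\xi,\eta}(\ul{x}w)|\leq C\max(1,|x|)$, I would invoke formula \eqref{J-sekibunkoushiki} on the range $|x|\geq q^l$: the integrand $|\sigma(\ul{xy^{-1}})\cb'|_\cb\,(x^{-1}y,y^{-1})\,\psi^{\xi}(\cdots)$ is bounded by $\sup_h\|\sigma(h)\|<\infty$ (finite-dimensional $\sigma$), while the integration domain $\vpi^nO$ has volume $|x|$, yielding $|J_\pi^{\xi,\eta}(\ul{x}w)|\leq C|x|$ there. On $|x|\leq 1$ the first assertion gives the vanishing, and on the intermediate annulus $1<|x|<q^l$ only finitely many cosets contribute and local constancy of $\sigma$ provides a uniform bound; the two combine into the stated envelope.

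For the third claim, the preceding bounds dominate $|J_\pi^{\xi,\eta}(\ul{x}w)||x|^{\Re(s)-1/2}$ by $C|x|^{\Re(s)+1/2}\,\mathbf{1}_{\{|x|>1\}}$, which is integrable on $(k^\times,d^\times x)$ precisely when $\Re(s)<-1/2$. The identification $\Gamma_{\pi,\mu}^{\xi,\eta}(s)=\int_{k^\times}J_\pi^{\xi,\eta}(\ul{x}w)\chi_\psi(x)\mu(x)|x|^{s-1/2}\,d^\times x$ then follows by dominated convergence applied to the defining limit in \eqref{DefGammaFactor}, since the truncated integrands over $P^{-n}-P^m$ are majorized by this $L^1$ function uniformly in $n$ and $m$.
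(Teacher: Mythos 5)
Your overall route is exactly the paper's: realize $\pi$ as $\cInd_{\ol{H}_i}^{\ol{S}}(\sigma)$, take $\phi=\phi^{e}_{\cb'}$ so that $W^\eta_\phi(e)=1$, read off the vanishing from the support condition, get the growth bound from \eqref{J-sekibunkoushiki}, and pass to the limit in \eqref{DefGammaFactor} by dominated convergence. The paper's proof of the proposition is literally a citation of \eqref{J-hyouka1} and of \eqref{J-sekibunkoushiki} from the proof of Theorem \ref{thm-gamma}, so the second and third parts of your argument are the intended ones and are correct.

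The step you flag as ``the main obstacle'' --- showing that $[\sigma(\ul{x}w)\cb']_{\cb}$ vanishes for $x\in O^\times$ --- is not an obstacle you should try to overcome: that claim is false, and the first assertion as printed contains a typo ($O-\{0\}$ should read $P-\{0\}$). Two pieces of internal evidence: the paper's own proof only invokes \eqref{J-hyouka1}, which asserts vanishing for $x\in P$ and nothing more; and the worked example with $k=\QQ_3$ computes $\Gamma^{\xi,\xi}_{\pi,\mu}(s)=2\int_{\Z_3^\times}J^{\xi,\xi}_\pi(\ul{x}w)\chi_\psi(x)\mu(x)\,d^\times x=\tfrac{4}{3}\neq 0$, which is impossible if $J^{\xi,\xi}_\pi(\ul{x}w)$ vanished on $O^\times$ (indeed, in that example $\sigma$ is one-dimensional and your collapsed matrix coefficient is a root of unity). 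So you should simply prove vanishing on $P$ from the support dichotomy, as you already do, and stop there. The only downstream adjustment is in your majorant: since $J^{\xi,\eta}_\pi(\ul{x}w)$ need not vanish on the unit annulus, the correct dominating function is $C\max(1,|x|)^{\Re(s)+1/2}\,\mathbf{1}_{\{|x|\geq 1\}}$ rather than $C|x|^{\Re(s)+1/2}\,\mathbf{1}_{\{|x|>1\}}$; the extra annulus $|x|=1$ has finite $d^\times x$-measure and bounded integrand, so integrability for $\Re(s)<-1/2$ and the dominated-convergence step go through unchanged. With those two corrections your proposal coincides with the paper's proof.
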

\begin{proof}
The first assertion was proved in the proof of Theorem \ref{thm-gamma} (see \eqref{J-hyouka1}). The estimate of $J_{\pi,\mu}^{\xi,\eta}(\ul{x}w)$ for $|x|\gg 1$ results from the integral expression \eqref{J-sekibunkoushiki} easily.  By the first and the second claim, the function $J_{\pi}^{\xi,\eta}(\ul{x}w)|x|^{\Re(s)-1/2}$ has a majornat $\max(1,|x|)^{\Re(s)+1/2}$ on $k^\times$, which for $\Re(s)<-1/2$ is integrable on $k^\times$; then, the last formula follows from \eqref{DefGammaFactor}. 
\end{proof}

\begin{lem} \label{thm-FE-L2}
Let $\xi \in k'(\pi)$. Then for all $a\in k^\times$, 
$$
W^\xi_v(\dm{a}w)=\sum_{\eta\in k'(\pi)}\frac{|\eta|}{2}\int_{k\t}J^{\xi,\eta}_\pi(\dm{ay}w)(ay,y)W^\eta_v(\dm{y})d\t y.
$$
\end{lem}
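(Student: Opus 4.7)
The plan is to recover $W^\xi_v(\dm{a}w)$ from the Bessel function of $\pi$ via Fourier inversion applied to
\[
\phi_a(x) := W^\xi_v(\dm{a}\,w\,n(x)), \quad x\in k.
\]
By Lemma \ref{thm-FE-L1} one has $\phi_a\in\Scal(k)$, so the improper integral in \eqref{Def-BesselFt} reduces to an ordinary Lebesgue integral, and since $\dm{a}w\in\ol{B}_S\,w\,N$, that relation identifies the Fourier transform of $\phi_a$ (with respect to $\psi$) as $\eta\mapsto J^{\xi,\eta}_\pi(\dm{a}w)\,W^\eta_v(e)$ for $\eta\in k(\pi)$, and as zero for $\eta\notin k(\pi)$. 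Evaluating Fourier inversion at $x=0$ with the self-dual Haar measure $d\eta=|\eta|\,d^\times\eta$ yields
\[
W^\xi_v(\dm{a}w) = \int_{k(\pi)} J^{\xi,\eta}_\pi(\dm{a}w)\,W^\eta_v(e)\,|\eta|\,d^\times\eta.
\]

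Next I would decompose the integral over square classes. Writing $k(\pi)=\bigsqcup_{\eta_0\in k'(\pi)}\eta_0(k^\times)^2$ and substituting $\eta=\eta_0 u^2$, and using that the squaring map is two-to-one on $k^\times$, the above becomes
\[
W^\xi_v(\dm{a}w) = \sum_{\eta_0\in k'(\pi)} \frac{|\eta_0|}{2} \int_{k^\times} J^{\xi,\eta_0 u^2}_\pi(\dm{a}w)\,W^{\eta_0 u^2}_v(e)\,|u|^2\,d^\times u,
\]
which already has the outer shape of the desired identity.

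The final step is to transport the $u$-dependence from the parameter indices into the group variable. By \eqref{Bessel-f0} one has $W^{u^2\eta_0}_v(e)=c_{\eta_0}(u)^{-1}W^{\eta_0}_v(\dm{u})$. By \eqref{Bessel-f1} with $t=1$, combined with the cocycle computation $\dm{a}\,\dm{u}\,w = (u,a)\,\dm{au}\,w$ in $\ol{S}$ (a direct evaluation of $\{\cdot,\cdot\}$ on diagonal elements and $w$) and the genuineness of $\pi$, one obtains
\[
J^{\xi,u^2\eta_0}_\pi(\dm{a}w) = c_{\eta_0}(u)\,(u,-1)(u,a)\,|u|^{-2}\,J^{\xi,\eta_0}_\pi(\dm{au}w).
\]
The factors $c_{\eta_0}(u)^{\pm 1}$ cancel, and the Hilbert symbol identities $(u,u)=(u,-1)$ together with bimultiplicativity give $(u,-1)(u,a)=(au,u)$, producing exactly the integrand $J^{\xi,\eta_0}_\pi(\dm{au}w)(au,u)W^{\eta_0}_v(\dm{u})$ of the claim after renaming $u\to y$ and $\eta_0\to\eta$.

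The principal technical hurdle is the careful bookkeeping of the $2$-cocycle $\{\cdot,\cdot\}$ in $\ol{S}$ when rewriting $\dm{a}\dm{u}w$ as a central sign times $\dm{au}w$, together with the Hilbert symbol manipulations needed to produce precisely the factor $(ay,y)$ rather than some equivalent form such as $(u,-a)$ or $(a,y)(y,-1)$.
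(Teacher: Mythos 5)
Your proof is correct and follows essentially the same route as the paper: Fourier inversion applied to $\phi_a$ (justified by Lemma \ref{thm-FE-L1}), identification of $\widehat{\phi_a}$ through the defining relation \eqref{Def-BesselFt} of the Bessel function, and a decomposition of the dual variable over square classes (where the factor $|2|=1$ from \eqref{Odd} makes your Jacobian count come out right). The only organizational difference is that the paper performs the substitutions $y\mapsto\eta y^2$, $z\mapsto y^{-2}z$ inside the double integral and absorbs $\dm{y}$ into the vector before invoking \eqref{Def-BesselFt} once per square class, whereas you invoke \eqref{Def-BesselFt} at every $\eta$ and then reduce to representatives via \eqref{Bessel-f0}, \eqref{Bessel-f1} and the cocycle identity $\dm{a}\,\dm{u}=(a,u)\,\dm{au}$ together with $(u,u)=(u,-1)$; the two bookkeepings are equivalent and both produce the factor $(ay,y)$.
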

\begin{proof}
For $\phi \in {\mathcal S}(k)$, define its Fourier transform $\wh{\phi}$ as
$$\wh{\phi}(y):=\int_k\phi(x)\psi(-yx)dx\quad(y\in k)$$ 
where $dx$ is the self-dual Haar measure with respect to $\psi$. Fix $a\in k^\times$. By Lemma \ref{thm-FE-L1} and by the Fourier inversion formula, 
{\allowdisplaybreaks\begin{align*}
W^\xi_v(\dm{a}w)&=\wh{\wh{\phi_a}}(0)=\int_k\int_kW^\xi_v(\dm{a}wn(z))\psi(-yz)dzdy\\
&=\sum_{\eta\in k\t/(k\t)^2}\int_{\eta(k\t)^2}\int_kW^\xi_v(\dm{a}wn(z))\psi(-yz)dzdy\\
&=\sum_\eta\frac{|\eta|}{2}\int_{k\t}\int_kW^\xi_v(\dm{a}wn(z))\psi(-\eta y^2z)|y|dzdy\\
&=\sum_\eta\frac{|\eta|}{2}\int_{k\t}\int_kW^\xi_v(\dm{a}wn(y^{-2}z))\psi^\eta(-z)dzd\t y\\
&=\sum_\eta\frac{|\eta|}{2}\int_{k\t}\int_kW^\xi_{\pi(\dm{y})v}(\dm{ay}wn(z))(ay,y)\psi^\eta(-z)dzd\t y.
\end{align*}}From \eqref{Def-BesselFt}, 
$$\int_kW^\xi_{\pi(\dm{y})v}(\dm{ay}wn(z))\psi^\eta(-z)dz=\begin{cases}J^{\xi,\eta}_\pi(\dm{ay}w)W^\eta_v(\dm{y})&\eta\in k(\pi)\\ 0&\eta\notin k(\pi)\\ 
\end{cases}.
$$
Note that the improper integral in \eqref{Def-BesselFt} can be replaced with the usual integral, which is absolutely convergent due to Lemma~\ref{thm-FE-L1}. 

Applying this, we have the desired formula.
\end{proof}

Now, we start the proof of Theorem \ref{thm-FE}. 
Suppose $\Re(s)<-1/2$. Then,  
{\allowdisplaybreaks\begin{align}
Z(s,\mu, l^\xi, \pi(w)v)
&=2\int_{k\t}W^\xi_v(\dm{x}w)\chi_\psi(x)\mu(x)|x|^{s-\ha}d\t x
 \notag
\\
&=\int_{k\t}\sum_{\eta\in k(\pi)/(k\t)^2}|\eta|\int_{k\t}J^{\xi,\eta}_\pi(\dm{xy}w)(xy,y)W^\eta_v(\dm{y})\chi_\psi(x)\mu(x)|x|^{s-\ha}d\t yd\t x
 \notag
\\
&=\sum_\eta|\eta|\iint_{k\t\times k\t} J^{\xi,\eta}_\pi(\dm{x}w)(x,y)W^\eta_v(\dm{y})\chi_\psi(xy\iv)\mu(xy\iv)|xy\iv|^{s-\ha}d\t x d\t y
 \notag
\\
&=\sum_\eta|\eta|\iint_{k\t\times k\t}J^{\xi,\eta}_\pi(\dm{x}w)(x,y)W^\eta_v(\dm{y})\chi_\psi(xy\iv)\mu(xy\iv)|xy\iv|^{s-\ha}d\t x d\t y
 \label{++}
\\
&=\sum_\eta|\eta|\int_{k\t}J^{\xi,\eta}_\pi(\dm{x}w)\chi_\psi(x)\mu(x)|x|^{s-\ha}d\t x\int_{k\t}W^\eta_v(\dm{y})\chi_\psi(y)\mu\iv(y)|y|^{\ha-s}d\t y
 \notag
\\
&=\frac{1}{4}\sum_\eta|\eta|\Ga^{\xi,\eta}_{\pi,\mu}(s)Z(1-s,\mu\iv, l^\eta,v).
 \notag
\end{align}}Note that the double integral in \eqref{++} is absolutely convergent for $\Re(s)<-1/2$ by Lemma \ref{Bessel-asymp}, due to which, we apply Fubini's theorem to guarantees the above computation. Thus the functional equation in Theorem \ref{thm-FE} is proved for $\Re(s)<-1/2$. By Theorem \ref{thm-gamma} and \ref{thm-ZetaInt}, it holds for all $s\in \C$ by analytic continuation. 

\medskip
\noindent
{\bf Remark} : Let $\omega_{\pi}$ be the central character of $\pi$. Then, we have $Z(s,\mu,l^\xi,v)=0$ for all $s$, $\xi$ and $v$ unless $\omega_{\pi}(-1)=\chi_{\psi}\mu(-1)$. Indeed, $W_{v}^{\xi}(\ul{(-x)})=W_{v}^\xi([-1,(x,-1)]\,\ul{x})=\omega_{\pi}(-1)(x,-1)\,W_{v}^\xi(\ul{x})$ due to \eqref{Genuine}. Hence, by the change of variables $x\rightarrow -x$ and by \eqref{GenChar-f0}, we have
\begin{align*}
Z(s,\mu,l^\xi,v)&=\int_{k^\times}W_{v}^{\xi}(\ul{(-x)})\,\chi_\psi(-x)\mu(-x)\,|-x|\,\d x
\\
&=\omega_{\pi}(-1)\chi_{\psi}(-1)\mu(-1) \int_{k^\times} W_{v}^{\xi}(\ul{x})\,\chi_\psi(x)\mu(x)\,|x|\,\d x
=\omega_{\pi}(-1)(\chi_{\psi}\mu)(-1)\,Z(s,\mu,l^\xi,v).
\end{align*}
When $\omega_{\pi}=(\chi_\psi\mu)|Z$, by \eqref{HaarA}, we can write 
$$Z(s,\mu,l^\xi,v)=\int_{\ol{A}}W_v^{\xi}(a)(\chi_{\psi}\mu|\cdot|^{s-1/2})(a) \d a.
$$
This explains the factor $2$ in \eqref{Def-LZF}. 

\section{Local prehomogenous zeta-distribution with spherical functions}

We consider the case when the prehomogenous vector space is a $k$-form of $({\rm PGL}_2\times {\rm GL}_1, {\rm sym}^2)$ with ${\rm sym}^2$ being the space of symmetric matrices of degree $2$. This example is already examined in details over $\R$ in \cite{Sa94}, when the spherical function is for an unramified principal series of ${\rm PGL}_2(\R)$. In this section, we denote a representation of $\ol{S}$ by $\tilde \pi$, reserving the notation $\pi$ for a representation of ${\rm PGL}_2(k)$ (or its inner $k$-form).

\subsection{Local zeta-function} \label{sec:LocZeta}
Let $k_0=k(\sqrt{u})$ be an unramified quadratic extension of $k$, where $u$ is a non-square unit of $O$; the Galois conjugate of $x\in k_0$ is denoted by $\bar x$. Fix a prime element $\varpi$ of $k$. Then 
$${\rm H}=\left\{\left[\begin{smallmatrix} x & \varpi \bar y\\ y & \bar x\end{smallmatrix} \right]\mid x,\,y\in k_0 \right\}
$$
is a division subalgebra of ${\rm M}_2(k_0)$; recall that ${\rm H}$ is the unique division quaternion algebra over $k$ up to isomorphism.   

Let $D$ be a quaternion algebra over $k$ with the reduced norm map $\nr_{D}:D\rightarrow k$ and the reduced trace map $\tr_{D}:D\rightarrow k$. Then $(V_D,-\nr_{D})$ with $V_{D}:=\{X\in D \mid \tr_{D}(X)=0\}$ is a $3$-dimensional quadratic space. The quadratic space $V_{D}$ is endowed with the self-dual Haar measure $\d X$ so that $\mathcal F\mathcal Ff(0)=f(0)\,(f\in  \Scal(V_D)$ with 
$$
\mathcal F f(x)=\int_{V}f(y)\psi(\tr_{D}(xy))\,\d y. 
$$
For $g\in D^\times$, define $\rho(g)\in \GL(V_{D})$ (acting on $V_D$ from right) by 
$$
g\,\rho(g)=g^{-1}Xg, \quad X\in V_{D}.
$$
Then the map $g\mapsto \rho(g)$ induces a group isomorphism from $PD^\times=D^\times /k^\times$ onto the special orthogonal group ${\rm SO}(V_{D})$. For $\xi\in k^\times$, set
$$
V_{D}^{0}(\xi):=\{X\in V_D\mid -\nr_{D}(X)=\xi\}.
$$
Let $k_{D}^\times$ denote $k^\times$ or $k^\times-(k^\times)^2$ according as $D$ is split or not. Then $V_{D}^{0}(\xi)\not=\emp$ if and only if $\xi \in k^\times_{D}$. For $\xi \in k_{D}^\times$, Witt's theorem implies that $V_{D}^0(\xi)$ is a $PD^\times$-orbit; we choose a base point $x_{\xi}\in V_{D}^0(\xi)$ once and for all and set 
$$
T_{\xi}=\{g\in D^\times \mid x_\xi\,\rho(g)=x_{\xi}\}.
$$
Then, the mapping $T_\xi g\mapsto x_{\xi} \rho(g)$ yields a $PD^\times$-equivariant isomorphism $T_{\xi}\bsl D^\times \longrightarrow V_D^{0}(\xi)$. Note that $Z\subset T_\xi$ and that $T_{\xi}$ coincides with $k(x_{\xi})^{\times}$, where $k(x_{\xi})\subset D$ is the centralizer of $x_{\xi}$ in $D$ which is isomorphic to the quadratic etale $k$-algebra $k[\sqrt{\xi}]$. 

Let $\pi$ be an irreducible smooth representation of $PD^\times$. Let $k(\pi)$ be the set of all those $\xi\in k^\times$ such that 
$$
 \Hom_{T_\xi}(\pi,{\bf 1})\not=\{0\}.
$$ 
Note that $k(\pi)$ is a (possibly empty) union of square classes $\xi(k^\times)$ in $k^\times$. The multiplicity free theorem $\dim_{\C}\Hom_{T_\xi}(\pi,{\bf 1})\leq 1$ is known (\cite{Waldspurger1980}, \cite{Waldspurger1991}). When $k(\pi)$ is non empty, $\pi$ is said to be spherical; since $1\in k(\pi)$ when $D={\rm M}_2(k)$, any irreducible smooth generic representation $\pi$ of $\PGL_2(k)$ is spherical. For a spherical $\pi$ and $\xi\in k(\pi)$, let ${\cU}(\pi,\xi)$ denote the unique realization of $\pi$ as a submodule of smooth functions $C^\infty(V_{D}^{0}(\xi))$, i.e., ${\cU}(\pi,\xi)$ is the space of functions $g\mapsto \ell(\pi(g)v)$ with $v\in V_\pi$ and $\ell \in \Hom_{T_\xi}(\pi,{\bf 1})$. By extending the action $\rho$ of $PD^\times$ on $V_D^0$ to the action $\hat \rho$ of the direct product ${\mathcal G}:=k^\times \times PD^\times$ by letting $k^\times$ by homotheties on $V_{D}^0$, one obtaines a prehomogenous vector space $({\mathcal G} ,V_{D})$ whose ($k$-valued points of the) singular set $\Sigma$ is the zero locus of the unique basic relative invariant $P(X):=-4 \nr_{D}(X)$ on $V_{D}$. Note that the rational character of ${\mathcal G}$ attached to $P$ is $\omega(a,g): =a^{2}$. For any $\xi\in k_{D}^\times$, the set $V_{D}(\xi):=k^\times \,V_{D}^{0}(\xi)$ is an ${\mathcal G}$-orbit, which is open in $V_D$. We have 
\begin{align}
V_{D}-\Sigma=\bigsqcup_{\xi \in k_D^\times}V_{D}(\xi).
\label{OrbitDec}
\end{align}
Let $\d\tau_{PD^\times}(g)$ be the Tamagawa measures on $PD^\times$, i.e., $\d\tau_{PD^\times}$ is the quotient measure of $\d \tau_{D^\times}(g):=\d \lambda_{D}(g)|\nr_D(g)|^{-2}$ on $D^\times$ by the Haar measure on $Z\cong k^\times$, where $\lambda_{D}$ is the self-dual Haar measure on $D$. Since $|P(X)|^{-3/2}\d X$ is a $k^\times \times PD^\times$-invariant Haar measure on $V_{D}$, one can fix a Haar measure $\d\mu_{\xi}$ on the stabilizer ${\mathcal G}_{\xi}:=(k^\times \times PD^\times)_{x_\xi}$ in such a way that 
\begin{align}
\int_{V_D(\xi)}f(X)\,\frac{\d X}{|P(X)|^{3/2}}=\int_{{\mathcal G}_\xi \bsl {\mathcal G}}f(x_\xi \rho(\hat g))\frac{\d \tau_{{\mathcal G}}}{\d\tau_{\xi}}(\hat g), \quad f\in \Scal(V_{D}), 
\label{LocalZeta}
\end{align}
where $\hat g$ denotes a general element $(a,g)\in {\mathcal G}$ and $\d\tau_{{\mathcal G}}( \hat g)=\d^\times a \, \d \tau_{PD^\times}(g)$. Since $Z\bsl T_{\xi}$ is isomorphic to the identity Zariski connected component of\footnote{${\mathcal G}_\xi \cong O(\nr_{k[\sqrt{\xi}]/k})$ and $T_\xi\cong {\rm SO}(\nr_{k[\sqrt{\xi}]/k})$.} ${\mathcal G}_{\xi}=\{(\e,g)\in {\mathcal G}\mid x_\xi\,\rho(g)=\e x_\xi,\,\e\in \{\pm 1\}\}$, the measure on ${\mathcal G}_\xi$ yields one on $Z\bsl T_{\xi}$ by restriction.
Let $\frac{\d \tau_{PD^\times}}{\d \mu_\xi}$ denote the quotent measure on $(Z\bsl T_\xi)\bsl PD^\times=T_\xi \bsl D^\times $. Then the local zeta-integral attached to $u_{\xi} \in \cU(\pi,\xi)$ and $\Phi \in \Scal(V_{D})$ is defined as
\begin{align}
{\bf Z}_{\xi}(\mu|\cdot|^s; \Phi, u_{\xi}):=
|4\xi|^{s-3/2}\mu(-4\xi)\,\int_{k^\times}\int_{T_\xi \bsl D^\times}u_{\xi}(x_\xi \rho(g))\,\mu(a^2)|a|^{2s}\Phi(x_{\xi}\,\hat \rho(a,g))\,\d^\times a \,\tfrac{\d\tau_{PD^\times}}{\d\mu_\xi}(\dot{g})
 \label{Def-LZDSpherical}
\end{align}
for $s\in \C$ and $\mu \in \widehat{k^\times}$, which is shown to be absolutely convergent for $\Re(s) \gg 0$. We have an expression as an orbital integral on $V_{D}(\xi)$: 
\begin{align*}
{\bf Z}_{\xi}(\mu|\cdot|^s;\Phi,u_\xi)=\int_{V_{D}(\xi)} u_{\xi}(X)\,|P(X)|^{s-3/2}\mu(P(X))\,\Phi(X)\,\d X,\end{align*}
where the function $u_{\xi}$ on $V_{D}^{0}(\xi)$ is viewed as a homothethy invariant function on $V_{D}(\xi)$.

\subsection{Weil representation}
%Since the quadratic space $(D,\nr_{D})$ is a direct sum of $(V_1,{\bf q}_1)$ and the negative of $(V_D,-\nr_{D})$, 
%  Thus
%$$
%\gamma_{\psi}(V_{D},-\nr_{D})={\gamma_{\psi}({\bf q}_1)}\,\gamma_{\psi}(\nr_{D}%)=\alpha_{\psi}(1)\,\begin{cases} +1 \quad &(D\cong {\rm M}_2(k)), \\ 
%-1 \quad &(\text{$D\cong {\rm H}$}.
%\end{cases}
%$$
We refer to \cite{RallisSchiffmann} for details. Let $r_{\psi,D}$ denote the Weil representation of $\ol{S}$ on the space of Schwartz-Bruhat functions $\Scal(V_{D})$. Then,  
\begin{align}
r_{\psi,D}([{\underline t},\e])f(x)&=\e\,
\frac{\alpha_{\psi}(1)}{\alpha_{\psi}(t)}\,|t|^{3/2}\,f(tx), \quad t\in k^\times, \label{WeilR-f1} 
\\ 
r_{\psi,D}([{n}(b),\e])f(x)&=\e \psi(- b\nr_{D}(x))\,f(x), \quad b\in k, 
 \label{WeilR-f2}
\\
r_{\psi,D}([w,1])f(x)&={\mathcal F}_{D}f(x)\,\gamma_{\psi}(V_D,-\nr_{D}),
 \label{WeilR-f3}
\end{align}
where 
$$
\gamma_{\psi}(V_D,-\nr_{D}):=\alpha_{\psi}(1)\,\begin{cases} +1 \quad &(D\cong {\rm M}_2(k)), \\ 
-1 \quad &(\text{$D\cong {\rm H}$}.
\end{cases}
$$
We let the orthogonal group $O(V_D)$ act from the right on $V_D$. The regular representation of the orthogonal group $O(V_D)$ on $\Scal(V_D)$ is denoted by $R$, i.e., 
$$
[R(h)f](x)=f(xh), \quad h\in O(V_D),\,x\in V_D,
$$
Then $R(h)$ commutes with the operators $r_{\psi,D}(\sigma)\,(\sigma \in \widetilde S)$. Set $\hat{R}(g):=R(\rho(g))$ for $g\in {\rm PGL}_2(k)$. 

\subsection{Local theta lift}\label{sec:ThetaLift}
We have a representatoin $r_{\psi,D}\times \hat R$ of $\ol{S} \times P D^\times$ on $\Scal(V_D)$. A pair $(\tilde \pi,\pi)$ of an irreducible smooth representation $\tilde \pi$ of $\ol{S}$ and an irreducible smooth representation $\pi$ of $P D^\times$ is in theta-correspondence if there exists a non-zero bilinear map $\beta:V_{\pi}\times \Scal(V_D) \longrightarrow V_{\tilde \pi}$ such that 
\begin{align}
\beta(\pi(g)\phi,\hat R(g)\Phi)&=\beta(\phi,\Phi), \quad g\in P D^\times,\\ \beta(\phi,r_{\psi,D}(\sigma)\Phi)&=\tilde \pi(\sigma)\,\beta(\phi,\Phi), \quad \sigma\in \widetilde S
 \label{ThetaLift-f00}
\end{align}
for all $(\phi, \Phi)\in V_{\pi} \times \Scal(V_D)$; if this is the case, we write $\tilde \pi=\theta_{\psi}(\pi)$. It is known that $\beta$ is unique up to non-zero scalar multiples (\cite{RallisSchiffmann}, \cite{Waldspurger1980}). 

%As before, a Haar measure on the additive group $k$ is fixed to be self-dual with respect to $\psi$. 
Recall that the multiplicative group $Z\cong k^\times$ is endowed with the measure $(1-q^{-1})^{-1}\,|z|^{-1}\d z$, so that $\vol(O^\times)=1$. For each $\xi \in k(\pi)$, let $\d\tau_{T_{\xi}}$ denote the Tamagawa measure on $T_{\xi}$ defined by some $T_\xi$-invariant algebraic $2$-form on $T_{\xi}$. Let $\d\tau_{T_\xi\bsl D^\times}$ denote the quotient measure of $\d\tau_{D^\times}$ by $\d\tau_{T_\xi}$. Let $\d\tau_{Z\bsl T_\xi}$ denote the quotient measure of $\d\tau_{T_\xi}$ by the Haar measure on $Z\cong k^\times$, which is proportional to the ``orbitally'' normalized Haar measure $\d\mu_{\xi}$ on $Z\bsl T_\xi$ defined in \S\ref{sec:LocZeta}. Let $C(\xi)>0$ be the proportionality constant, i.e., 
\begin{align}
\d\tau_{Z\bsl T_\xi}=C(\xi)\,\d\mu_\xi.
 \label{ThetaUnitary-ff}
\end{align}
Let $\{I_{\xi}\}_{\xi \in k(\pi)/(k^\times)^2}$ be a family of $P D^\times$-isomorphisms $V_{\pi} \cong \cU(\pi,\xi)$. For $u_\xi \in \cU(\pi,\xi)$ and $\Phi\in \Scal(V_D)$, define a function 
\begin{align}
{\rm t}_{\psi^\xi}(u_\xi, \Phi): \sigma \longmapsto \int_{T_\xi \bsl D^\times
} u_{\xi}(x_\xi\,\rho(g))\,[r_{\psi,D}(\sigma)R(g)\Phi](x_\xi)\,\d \tau_{T_\xi\bsl D^\times}(\dot g), \quad \sigma\in \widetilde S, 
\label{ThetaLift-f0}
\end{align}
where the integral is convergent due to the compactness of the support of $g\mapsto [r_{\psi,D}(\sigma)R(g)\Phi](x_\xi)$ and the closedness of $V_{D}(\xi)$ in $V_{D}$ (\cite[\S II-1(p.228)]{Waldspurger1991}). Formulas \eqref{WeilR-f1} and \eqref{WeilR-f3} yield  
\begin{align}
{\rm t}_{\psi^{\xi}}(u_\xi, \Phi; (1,\e)n\sigma)&=\e\,\psi^{\xi}(n)\,{\rm t}_{\psi^\xi}(u_\xi, \Phi;\sigma), \quad (n,\sigma)\in N\times \ol{S}, \label{ThetaLift-f1}
\\
{\rm t}_{\psi^\xi}(u_\xi, \mathcal F \Phi;\sigma)&=\gamma_{\psi}(V_D,-\nr_D)\,{\rm t}_{\psi^\xi}(u_\xi, \Phi;\sigma w_0) 
\label{ThetaLift-f2}
\end{align}
Set $\Theta_\psi(\pi,\xi):=\{{\rm t}_{\psi^\xi}(u_\xi, \Phi)\mid \Phi \in \Scal(V_D),\,u_\xi \in \cU(\pi,\xi)\}$, which is an $\ol{S}$-invariant subspace with respect to the right-translations by elements of $\ol{S}$. Then it is known that there exists an irreducible smooth genuine representation $\tilde \pi=\theta_{\psi}(\pi)$ of $\ol{S}$ such that $k(\tilde \pi)=k(\pi)$ and its $\psi^\xi$-Whittaker model for $\xi\in k(\pi)$ is $\Theta_{\psi}(\pi,\xi)$ (\cite[Proposition 4]{Waldspurger1991}). From now on, we suppose $\pi$ is a supercuspidal representation of ${\rm PGL}_2(k)$; then $\tilde \pi$ is a supercuspidal representation of $\ol{S}$. Let $\{l^\xi\}_{\xi\in k(\pi)}$ be a family of $\psi^{\xi}$-Whittaker functions on $V_{\tilde \pi}$. Fix an intertwiner $\beta: V_{\pi}\times \Scal(V_D) \longrightarrow V_{\tilde \pi}$ as above once and for all. Then (by the multiplcitiy one theorem), we can normalize $I_{\xi}:V_{\pi}\cong \cU(\pi,\xi$ so that 
\begin{align}
\ell^{\xi}\left(\beta(\phi,\Phi)\right)={\rm t}_{\psi^{\xi}}(I_\xi(\phi),\Phi)(e), \quad (\phi,\Phi)\in V_{\pi}\times \Scal(V_D).
\label{ThetaLift-f3}
\end{align}
holds. The central character of $\tilde \pi$, which is known by \eqref{WeilR-f1} and \eqref{ThetaLift-f1}, is $\omega_{\tilde \pi}=\chi_{\psi}|Z$. Hence, the genuine characters $\chi_{\psi}\mu^2|\cdot|^{2s}$ of $\ol{A}$ (see \S\ref{sec:charA}) all restrict to $\omega_{\pi}|Z$ (see {\bf Remark} in \S\ref{sec:FE}). Recall the local zeta-function \eqref{Def-LZF} attached to $\tilde \pi,\mu$ and the local zeta-integral \eqref{Def-LZDSpherical} attached to $\pi,\mu$. The following is the key observation which motivates us the study of \eqref{Def-LZF}. 

\begin{prop} \label{ThetaLift-L1}
 For any $\phi\in V_{\pi}$ and $\Phi \in \Scal(V_{D})$, set $v_{\phi,\Phi}=\beta(\phi,\Phi)\in V_{\tilde \pi}$. Then for $\Re(s)\gg 0$, 
\begin{align*}
{\bf Z}_{\xi}(\mu|\cdot|^s; \Phi,I_{\xi}(\phi))&=C(\xi)\,
|4\xi|^{s-3/2}\mu(-4\xi)\times \tfrac{1}{2}\,
Z(2s-1, \mu^2, l^\xi,v_{\phi,\Phi})
%{\rm T}_\xi^{(\chi_\psi\mu^2|\,|^{2s-3/2})}(v_{\phi,\Phi}). 
\end{align*}
\end{prop}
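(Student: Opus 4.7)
\bigskip
\noindent\textbf{Plan.} The strategy is to unpack $W^{\xi}_{v_{\phi,\Phi}}(\ul x)$ as a $T_\xi$-orbit integral on $T_\xi\bsl D^\times$ using the intertwining property of $\beta$, substitute into the definition \eqref{Def-LZF} of $Z$, and then match the resulting double integral with the explicit form \eqref{Def-LZDSpherical} of $\mathbf{Z}_\xi$ via the measure identification $d\tau_{PD^\times}/d\mu_\xi=C(\xi)\,d\tau_{T_\xi\bsl D^\times}$ coming from \eqref{ThetaUnitary-ff}.

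First I compute $W^{\xi}_{v_{\phi,\Phi}}(\ul x)$. By the intertwining \eqref{ThetaLift-f00}, $\tilde\pi(\ul x)v_{\phi,\Phi}=\beta(\phi,r_{\psi,D}(\ul x)\Phi)$; applying $l^\xi$, using \eqref{ThetaLift-f3} and the fact that $R(g)$ commutes with $r_{\psi,D}(\sigma)$ yields
$$W^{\xi}_{v_{\phi,\Phi}}(\ul x)={\rm t}_{\psi^\xi}(I_\xi(\phi),\Phi)(\ul x)=\int_{T_\xi\bsl D^\times} u_\xi(x_\xi\rho(g))\,[r_{\psi,D}(\ul x)R(g)\Phi](x_\xi)\,d\tau_{T_\xi\bsl D^\times}(\dot g),$$
with $u_\xi=I_\xi(\phi)$. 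Invoking the Weil representation formula \eqref{WeilR-f1} for the diagonal, $[r_{\psi,D}(\ul x)R(g)\Phi](x_\xi)=\chi_\psi(x)|x|^{3/2}\Phi(x_\xi\hat\rho(x,g))$, so
$$W^{\xi}_{v_{\phi,\Phi}}(\ul x)=\chi_\psi(x)\,|x|^{3/2}\int_{T_\xi\bsl D^\times} u_\xi(x_\xi\rho(g))\,\Phi(x_\xi\hat\rho(x,g))\,d\tau_{T_\xi\bsl D^\times}(\dot g).$$

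Next I substitute into \eqref{Def-LZF} with parameters $(2s-1,\mu^2)$. For $\Re(s)\gg 0$, absolute convergence of the resulting double integral (using the Schwartz-Bruhat decay of $\Phi$ together with the compact support modulo $T_\xi$ of $g\mapsto u_\xi(x_\xi\rho(g))$, which follows from the realization of $\pi$ in $C^\infty(V_D^0(\xi))$) justifies Fubini. Collecting the character and absolute-value factors, $\chi_\psi(x)^2\mu(x)^2|x|^{2s}$ together with the identification $\mu(a^2)=\mu(a)^2$ matches the inner weight $\mu(a^2)|a|^{2s}$ appearing in \eqref{Def-LZDSpherical} after renaming $x=a$; the residual quadratic contribution $\chi_\psi(x)^2=(x,-1)$ combines with the conventions for the base point $x_\xi$ and the sign of $P(X)=-4\nr_D(X)$ to produce the prefactor $\mu(-4\xi)$.

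Finally I exchange $d\tau_{T_\xi\bsl D^\times}$ for $d\tau_{PD^\times}/d\mu_\xi$ using the compatibility of quotient Tamagawa measures together with \eqref{ThetaUnitary-ff}, which yields $d\tau_{PD^\times}/d\mu_\xi=C(\xi)\,d\tau_{T_\xi\bsl D^\times}$. Putting this together with the prefactor $|4\xi|^{s-3/2}\mu(-4\xi)$ of \eqref{Def-LZDSpherical} and the factor $\tfrac{1}{2}$ coming from the doubled integration in \eqref{Def-LZF} produces the stated identity. The main technical obstacle is the careful tracking of normalizations --- the Weil constant $\chi_\psi$ combined with $\chi_\psi(x)^2=(x,-1)$, the proportionality constant $C(\xi)$, and the sign/prefactor $\mu(-4\xi)$ --- while verifying absolute convergence (for $\Re(s)\gg 0$) to justify Fubini is routine but needed.
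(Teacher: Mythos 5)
Your overall route is the same as the paper's: the printed proof is a one‑line ``follows from \eqref{ThetaLift-f0}, \eqref{Def-LZF} and \eqref{Def-LZDSpherical}'', and you carry out exactly that unwinding. The correct parts: $W^{\xi}_{v_{\phi,\Phi}}(\ul{x})={\rm t}_{\psi^\xi}(I_\xi(\phi),\Phi)(\ul{x})$ via \eqref{ThetaLift-f00}, \eqref{ThetaLift-f3} and the commutation of $R(g)$ with $r_{\psi,D}$; the factor $\chi_\psi(x)|x|^{3/2}$ from \eqref{WeilR-f1}; the measure identity $\tfrac{\d\tau_{PD^\times}}{\d\mu_\xi}=C(\xi)\,\d\tau_{T_\xi\bsl D^\times}$ from \eqref{ThetaUnitary-ff}, which is where $C(\xi)$ comes from; and the factor $\tfrac12$ from the $2$ in \eqref{Def-LZF}. (One small misstatement: the prefactor $|4\xi|^{s-3/2}\mu(-4\xi)$ is not ``produced'' by anything in the computation — it is already written into the definition \eqref{Def-LZDSpherical} of ${\bf Z}_\xi$ and appears verbatim on the right-hand side of the Proposition, so it simply carries through.)

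The genuine gap is your treatment of the quadratic factor. After substituting, the integrand of $\tfrac12 Z(2s-1,\mu^2,l^\xi,v_{\phi,\Phi})$ carries the weight $\chi_\psi(x)\cdot\chi_\psi(x)\cdot\mu(x)^2|x|^{2s}=(x,-1)\,\mu(x^2)|x|^{2s}$, whereas the inner weight in \eqref{Def-LZDSpherical} is $\mu(a^2)|a|^{2s}$ with no Hilbert-symbol factor. Your sentence that ``$\chi_\psi(x)^2=(x,-1)$ combines with the conventions for the base point $x_\xi$ and the sign of $P(X)$ to produce the prefactor $\mu(-4\xi)$'' is not an argument: $(x,-1)$ is a nonconstant function of the integration variable, while $\mu(-4\xi)$ is an $x$-independent constant, so the one cannot be absorbed into the other. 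For the identity to hold as stated you must either show that $(x,-1)\equiv 1$ on the support of the integrand (false in general, e.g.\ whenever $-1\notin(k^\times)^2$ and $\Phi$ is a generic Schwartz function) or verify that the two $\alpha_\psi$-factors — the $\chi_\psi(x)$ in \eqref{Def-LZF} and the $\alpha_\psi(1)/\alpha_\psi(x)$ in \eqref{WeilR-f1} — are mutually inverse rather than equal under the intended normalization. This convention check is the entire nontrivial content hiding behind ``follows from the definitions'', and it is exactly the point your write-up glosses over; as written, the computation leaves a residual $(x,-1)$ and does not close.
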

\begin{proof}
This follows from \eqref{ThetaLift-f0}, Definitions \eqref{Def-LZF} and \eqref{Def-LZDSpherical}. 
\end{proof}

The following is our main theorem on the zeta-integrals with $\pi$-spherical functions on $PD^\times$, whose novelty lies in the description of the gamma-matrix $({\bf \Gamma}(\mu|\cdot|^{s};\xi,\eta))$ in terms of our $\Gamma_{\tilde \pi,\mu}^{\xi,\eta}(s)$ (see \S\ref{sec:Gamm}):  

\begin{thm} \label{ThetaLift-P1}
Let $(\pi,V_{\pi})$ be an irreducible supercuspidal representation of ${\rm PGL}_2(k)$ and $I_{\xi}:V_{\pi}\cong \cU(\pi,x_\xi)$ for $\xi \in k(\pi)$ a family of spherical models for $\pi$. Let $\mu$ be a character of $k^\times$. Then, for any $\Phi \in \Scal(V_{D})$ and $\phi \in V_{\pi}$, 
\begin{align*}
{\bf Z}_{\xi}({\mathcal F}\Phi,I_\xi(\phi);\mu^{-1}|\cdot|^{3/2-s})=\frac{\zeta_{k}(1)^{-1}\,\alpha_{\psi}(1)\,\varepsilon_{D}}{\#(k^\times/(k^\times)^2)}\,
\sum_{\eta \in k(\pi)/(k^\times)^2} 
{\bf \Gamma}(\mu^{-1}|\cdot|^{3/2-s};\xi,\eta)\,{\bf Z}_{\eta}(\Phi,I_\eta(\phi);\mu|\cdot|^{s})
\end{align*}
where
$$
{\bf \Gamma}(\mu^{-1}|\cdot|^{3/2-s};\xi,\eta):=C(\xi)C(\eta)^{-1}\,
|4|^{2s-3/2}|\xi\eta|^{s}\,\mu(\xi\eta^{-1})\times \Gamma_{\tilde \pi, \mu^{-2}}^{\xi,\eta}(3/2-2s), 
$$
and $\varepsilon_{D}$ denotes $+1$ or $-1$ according to $D$ is split or not. The function $
{\bf \Gamma}(\mu^{-1}|\cdot|^{3/2-s};\xi,\eta)$ is an entire function of $s\in \C$. 
\end{thm}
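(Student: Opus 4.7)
The strategy is to sandwich the functional equation of Theorem~\ref{thm-FE} between two applications of Proposition~\ref{ThetaLift-L1}, using the intertwining property of $\beta$ together with the Weil representation formula \eqref{WeilR-f3} to convert the Fourier transform ${\mathcal F}\Phi$ on the spherical side into the translate $\tilde\pi(w)v_{\phi,\Phi}$ on the Whittaker side. The intermediate object is the local Whittaker zeta function $Z$, and the matrix structure indexed by $\eta\in k(\pi)/(k^\times)^2$ comes directly from the sum in Theorem~\ref{thm-FE}.

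First I would apply Proposition~\ref{ThetaLift-L1} with the character $\mu^{-1}$ and complex parameter $3/2-s$ in place of $(\mu,s)$ to the left-hand side ${\bf Z}_\xi({\mathcal F}\Phi,I_\xi(\phi);\mu^{-1}|\cdot|^{3/2-s})$, reducing it to the Whittaker zeta function $Z(2(3/2-s)-1,\mu^{-2},l^\xi,v_{\phi,{\mathcal F}\Phi})$ multiplied by explicit scalars in $C(\xi)$, $|4\xi|^{-s}$, and $\mu(-4\xi)^{-1}$. Next, using $\mathcal F\Phi=\gamma_\psi(V_D,-\nr_D)^{-1}\,r_{\psi,D}(w)\Phi$ from \eqref{WeilR-f3} and the intertwining property \eqref{ThetaLift-f00} of $\beta$, I would rewrite $v_{\phi,{\mathcal F}\Phi}=\gamma_\psi(V_D,-\nr_D)^{-1}\,\tilde\pi(w)v_{\phi,\Phi}$; by the definition $\gamma_\psi(V_D,-\nr_D)=\alpha_\psi(1)\,\varepsilon_D$ this is where the constant $\alpha_\psi(1)\,\varepsilon_D$ of the statement is generated.

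I would then invoke Theorem~\ref{thm-FE} on $Z(\,\cdot\,,\mu^{-2},l^\xi,\tilde\pi(w)v_{\phi,\Phi})$ to obtain a sum over $\eta\in k(\tilde\pi)/(k^\times)^2=k(\pi)/(k^\times)^2$ involving $\Gamma^{\xi,\eta}_{\tilde\pi,\mu^{-2}}(\,\cdot\,)$ and $Z(\,\cdot\,,\mu^2,l^\eta,v_{\phi,\Phi})$; the prefactor $\tfrac14=\#(k^\times/(k^\times)^2)^{-1}$ of the Whittaker functional equation furnishes the corresponding prefactor of the statement. A reverse application of Proposition~\ref{ThetaLift-L1}, now with the original $(\mu,s)$, converts each $Z(\,\cdot\,,\mu^2,l^\eta,v_{\phi,\Phi})$ back into ${\bf Z}_\eta(\Phi,I_\eta(\phi);\mu|\cdot|^s)$ up to the scalars $C(\eta)$, $|4\eta|^{s-3/2}$, and $\mu(-4\eta)$. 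Regrouping all these scalars produces the claimed expression for ${\bf\Gamma}(\mu^{-1}|\cdot|^{3/2-s};\xi,\eta)$.

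Proposition~\ref{ThetaLift-L1} is first valid only for $\Re(s)\gg 0$, but Theorem~\ref{thm-ZetaInt} shows that the Whittaker zeta functions are polynomials in $q^{\pm s}$, the orbital-integral expression of ${\bf Z}_\xi$ extends entirely in $s$, and Theorem~\ref{thm-gamma} asserts that $\Gamma^{\xi,\eta}_{\tilde\pi,\mu^{-2}}(s)$ is a polynomial in $q^s$; hence the identity propagates to all $s\in\C$ by analytic continuation, and the entireness of ${\bf\Gamma}(\mu^{-1}|\cdot|^{3/2-s};\xi,\eta)$ is immediate from Theorem~\ref{thm-gamma}. The main obstacle is the scalar bookkeeping: reconciling the powers of $|4|$, $|\xi|$, $|\eta|$, the character values $\mu(-4\xi)$, $\mu(-4\eta)$, the proportionality constants $C(\xi),C(\eta)$ of \eqref{ThetaUnitary-ff}, and the root of unity $\gamma_\psi(V_D,-\nr_D)$ so that they recombine precisely into the normalization of ${\bf\Gamma}$ displayed in the statement.
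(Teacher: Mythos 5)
Your proposal is correct and takes essentially the same route as the paper: the paper's own proof is the single line that the theorem is ``a direct consequence of Theorem \ref{thm-FE} and Lemma \ref{ThetaLift-L1},'' and your expansion---two applications of Proposition \ref{ThetaLift-L1} bracketing the metaplectic functional equation, with \eqref{WeilR-f3} and \eqref{ThetaLift-f00} converting ${\mathcal F}\Phi$ into the $\tilde\pi(w)$-translate of $v_{\phi,\Phi}$, followed by analytic continuation via Theorems \ref{thm-ZetaInt} and \ref{thm-gamma}---is exactly the bookkeeping the paper leaves implicit.
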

\begin{proof} This is a direct consequence of Theorem \ref{thm-FE} and Lemma \ref{ThetaLift-L1}. \end{proof}

\medskip
\noindent
{\bf Remark}: When $\pi$ is an unramified principal series of ${\rm PGL}_2(k)$ (so that $D={\rm M}_2(k)$), then the zeta-integrals \eqref{Def-LZDSpherical} can be analized by linking them to the two variable zeta-integral associated to the prehomogenous vector space $\{X\in {\rm M}_2(k)\mid \tr(X)=0\}$ for the triangular group $\{\left(\begin{smallmatrix} * & * \\ 0 & * \end{smallmatrix}\right)\}$(\cite[Theorem 3.6]{Sa89}). Thus, an explicit form of the gamma matrix in the local functional equation is determined in this case. Our method explained in this section also works on more general principal series of ${\rm PGL}_2(k)$ giving an explicit formula of the gamma matrix; details will appear elsewhere.

\medskip

\subsection*{Acknowledgement} 
The auhtors thank Satoshi Wakatsuki for his constant interest on this work. A part of this work is a content of master's thesis of the first author written under the supervision of Professor Wakatsuki. The first author is grateful to Yuanqing Cai for informing him the papers \cite{Man84-I} and \cite{Man84-II} of Manderscheid.
%
%\begin{bibdiv}
%\begin{biblist}
%

%%%%%%%%%%%%%%%%%%%%%%%%%%%%%%%%%%%%%%%%%%%%%%%%%%%%%%%%%%%%%%%%%%%%%%%%%%%%%%%%

\end{document}